\begin{document}
\newtheorem{theorem}{Theorem}[section]
\newtheorem{lemma}[theorem]{Lemma}
\newtheorem{defin}[theorem]{Definition}
\newtheorem{cor}[theorem]{Corollary}
\newtheorem{prop}[theorem]{Proposition}
\newtheorem{claim}[theorem]{Claim}
\newtheorem{remark}[theorem]{Remark}
\newenvironment{proof}{{\raggedright{\em Proof.}}}{\hspace{.1in}$\square$}
\newenvironment{poc}[1]{{\raggedright{\bf Proof of Claim {#1}.}}}{\hspace{
  .1in}$\square$}
\newenvironment{pol}[1]{{\raggedright{\bf Proof of Lemma {#1}.}}}{\hspace{
  .1in}$\square$}
\newenvironment{pop}[1]{{\raggedright{\bf Proof of Proposition {#1}.}}}
 {\hspace{ .1in}$\square$}
\newenvironment{pot}[1]{{\raggedright{\bf Proof of Theorem {#1}.}}}{\hspace{
  .1in}$\square$}
\newcounter{re}[theorem]
\newcounter{rom}
\newcommand{\rmn}{\stepcounter{rom}{\rm (\roman{rom})}}
\newcounter{gen}
\newcommand{\trm}[1]{\setcounter{gen}{#1}{\rm\roman{gen}}}
\newcommand{\ca}[1]{\mbox{$\Bbb{#1}$}}
\newcommand{\sca}[1]{\mbox{\scriptsize $\Bbb{#1}$}}
\newcommand{\tca}[1]{\mbox{\tiny $\Bbb{#1}$}}
\newcommand{\bol}[1]{\mbox{\boldmath ${#1}$}}
\newcommand{\Romn}[1]{\setcounter{gen}{#1}\Roman{gen}}

\title{An Approximation of Local Antiderivatives of
Relative Differential on Arithmetic
Surface}
\author{Yuhan Zha
\\{Institute of Mathematics}\\
{Chinese Academy of Sciences} \\{yhzha1@sina.com}} \maketitle

\begin{abstract}
\label{abstract} Let $\omega$ be a relative differential form on
an arithmetic surface $X$. We construct a family of rational
functions $G_x$ on  $X\otimes\ca{C}$, which can approximate local
antiderivatives of $\omega$ over an open set on $X\otimes\ca{C}$.
From this family of rational functions, we construct a rational
function $G_2$ on $X$. The function $G_2$ can generate an element
in the ring of integers of a number field, which can approximate
an inner product produced by $\omega$ and $\overline{\omega}$ over
an open set on $X_{\sca{C}}$. This will give a relation between
the height of a rational curve $E_P$ on $X$ and the norm of the
relative differential form $\omega$, which will give an upper
bound for the height of the rational curve $E_P$ under a few
assumptions.
\end{abstract}

\section{Introduction}
Let $R$ be the ring of integers of a number field $F$, and $Y={\rm
Spec}R$. Let $X$ be a stable family of curves of genus $g>1$ over
$Y$. Let $\pi:X\longrightarrow Y$ be the natural projection. Let
$E_P$ be a curve on $X$ that is rational over $Y$. Let
$\omega_{X/Y}$ be the canonical dualizing sheaf on $X$ endowed
with the canonical Hermitian metric [L1]. Let $\eta$ be the
canonical section of the line bundle ${\cal O}_X(E_P)$ vanishing
along $E_P$ on $X$. Let $S$ be the set of all the complex
embedding $\sigma:F\hookrightarrow\ca{C}$.

Let $\omega$ be a relative differential on $X$, such that there
exists sections $u_j$ of ${\cal O}_X(m_2E_P)$ on $X$ and sections
$e_j$ of ${\cal O}_X(m_3E_P)$ on $X$ satisfying
\begin{equation}\label{da8950} \sum_{j\in I_1}u_j\cdot e_j=0
\end{equation} and $
\omega=\sum_{j\in I_1}\frac{u_j}{\eta^{m_2}}\cdot
d\frac{e_j}{\eta^{m_3}}$ on $X$, where $m_2,m_3$ are positive
integers. Let $n_1$ be an integer satisfying $n_1>9g^2$. Let
$n_2=2n_1$. Let $\{\varrho_i:i\in I\}$ be the set of all the $n_1$
complex numbers satisfying $\varrho_i^{n_1}=1$. Let $n_2=2n_1$.
For $i=1,2$, let $\xi_i$ be a section of ${\cal O}_X(n_iE_P)$ on
$X$, that satisfy the following:
\begin{enumerate}\setcounter{rom}{0}\item[\rmn]
\begin{equation}
\label{da8843} \log\|\xi_i\|\leq
\log\|\xi_i\|_{h_1,\sigma}+O(1)\end{equation} for all the complex
embedding $\sigma\in S$ and $i=1,2$, where $\|\xi_i\|$ denotes the
norm of $\xi_i$ under the canonical Hermitian metric [L1] on
$R^0\pi_*(X,{\cal O}_X(n_iE_P))$, and $\|\xi_i\|_{h_1,\sigma}$
denotes the norm of $\xi_i$ under the canonical Hermitian metric
[L1] on the restriction of  ${\cal O}_X(n_iE_P)$ to
$E_P\otimes_\sigma\ca{C}$, and the constant implicit in $O(1)$ is
determined by $n_i$ and $X_{\sca{C}}$. \item[\rmn] For complex
embedding $\sigma\in S$, let $\{x_{j,\sigma}:j\in I_{2,\sigma}\}$
be the set of all the points on $X_\sigma$, where $\xi_1=0$. For
all $j\in I_{2,\sigma}$, there exists a simple connected open set
$U_{j,\sigma}$ on $X_\sigma$ containing $x_{j,\sigma}$, such that
\begin{equation}\label{da0600}
d\frac{\xi_1}{\eta^{n_1}}\neq 0
\end{equation} on $U_{j,\sigma}$, and $U_{i,\sigma}\cap
U_{j,\sigma}$ is empty for all $i,j\in I_{2,\sigma}$ satisfying
$i\neq j$, and
\begin{equation}\label{da0620}
\left|\frac{\xi_1}{\eta^{n_1}}\right|\geq a_1\cdot
\|\xi_1\|\end{equation} over $X_\sigma\smallsetminus \bigcup_{j\in
I_{2,\sigma}}U_{j,\sigma}$, where $a_1$ is a positive number
determined by $n_1,X_{\sca{C}}$. \item[\rmn] We have
$\|\xi_1\|>\frac{8}{a_1}$. For all $\sigma\in S$, we have the
following:

{\em For all $j\in I_{2,\sigma}$, there exists $i_j\in I$, such
that
\begin{equation}\label{da0610}
\left|\frac{\|\xi_2\|\cdot\eta^{n_2}}{\xi_2}(x)-\varrho_{i_j}\right|
<\frac{a_2}{\|\xi_1\|}\end{equation} for all $x\in U_{j,\sigma}$
satisfying $\left|\frac{\xi_1}{\eta^{n_1}}(x)\right|<2$, where
$a_2$ is a positive number determined by $n_1,X_{\sca{C}}$.  And
for all $j_1,j_2\in I_{2,\sigma}$ satisfying $j_1\neq j_2$, we
have $\varrho_{i_{j_1}}\neq\varrho_{i_{j_2}}$.} \item[\rmn] For
$i=1,2$, let $C_i$ be the divisor determined by $\xi_i=0$ as a
section of ${\cal O}_X(n_iE_P)$ on $E_P$. For convenience, we
denote the natural push forward cycle of $C_i$ on $Y$ still by
$C_i$. Then $\xi_i$ is a section of ${\cal O}_X(n_iE_P-C_i)$ on
$X$ for $i=1,2$.
\end{enumerate}

Under these assumptions, we construct a family of rational
functions $G_x$ on $X_{\sca{C}}$, that can approximate local
antiderivatives of $\omega$ over an open set on $X_{\sca{C}}$. And
from this family of functions, we construct a rational function
$G_2$ on $X$, that can generate an element in $R$, which can
approximate an inner product produced by $\omega$ and
$\overline{\omega}$ over an open set on $X_{\sca{C}}$ when the
canonical height $\omega_{X/Y}\cdot E_P$ is large enough. Note
when $\|\xi_i\|$ are large enough, we can pick $\xi_i$ such that
conditions (\trm{1}) (\trm{2}) (\trm{3}) are satisfied. When
$\omega_{X/Y}\cdot E_P$ is sufficiently large, $\|\xi_i\|$ will be
sufficiently large and it is possible to pick $\xi_i$ such that
(\trm{4}) is satisfied. So a theorem under assumptions above can
have applications. So for simplicity and to clarify what is new,
we will only prove our result under the assumptions
(\trm{1})-(\trm{4}) and assume $C_1=C_2=0$.

Since our construction works for number field case, and does not
work for function field case, so we first state a difference
between number fields and function fields. Then we use this
difference to construct the rational functions $G_x$ and $G_2$.

The difference happens over projective lines. Let $V$ be a two
dimensional vector space over $\ca{C}$ endowed with a Hermitian
metric. Let $\ca{P}_{\sca{C}}^1$ be the complex projective line
associated with $V$. Let ${\cal O}_{\sca{P}_{\tca{C}}^1}(1)$ be
the canonical line bundle of degree $1$ over $\ca{P}_{\sca{C}}^1$.
Let $h$ be the canonical Hermitian metric on ${\cal
O}_{\sca{P}_{\tca{C}}^1}(1)$ induced from the Hermitian metric on
$V$. Let $\{x_1,\cdots, x_n\}$ be a set of different points on
$\ca{P}_{\sca{C}}^1$, where $n>2$. Then for all $1\leq i\leq n$,
there exists an element $w_{x_i}\in V$, such that
\begin{equation}\label{da8800}\left|w_{x_i}(x_i)\right|_{h}=1
\end{equation} and
\begin{equation}\label{da8810}\left|w_{x_i}(x_j)\right|_{h}<1
\end{equation} for all $j\neq i$ satisfying $1\leq j\leq n$.
Note this fact is not true over function fields of characteristic
$p>0$. Over characteristic $p>0$, we can find a section of the
canonical line bundle of degree one over the projective line,
whose absolute value  is $<1$ at one point and is equal to $1$ at
the other $n-1$ points.

Now we use the fact above to construct the functions $G_x$
mentioned above. Let $m>0$ be a positive integer.  Let $V^{\otimes
m}$ be the vector space generated by $w_1\otimes\cdots \otimes
w_m$ for all $w_i\in V$. Let $S^m(V)$ be the submodule of
$V^{\otimes m}$ that is invariant under the action of the
symmetric group  on $m$ symbols, i.e. invariant under all the
permutations of $\{w_1,\cdots,w_m\}$. Let $M_m$ be the $m+1$
dimensional vector space generated by all the sections of ${\cal
O}_{\sca{P}_{\tca{C}}^1}(m)$ over $\ca{P}_{\sca{C}}^1$. Let's
consider the linear map $V^{\otimes 2m} \longrightarrow M_m
\otimes M_m$ defined by mapping $w_1\otimes\cdots\otimes w_{2m}$
to $\prod_{i=1}^mw_i \otimes \prod_{i=m+1}^{2m}w_i$, where $w_i\in
V$ for all $1\leq i\leq 2m$. By restricting the map above to the
submodule $S^{2m}(V)$ of $V^{\otimes 2m}$, we have the following
map
\begin{equation}\label{da8830} S^{2m}(V)\longrightarrow
M_m \otimes M_m
\end{equation} Since the natural map $S^{2m}(V)\longrightarrow
M_{2m}$ is an isomorphism, so (\ref{da8830}) determines the
following homomorphism
\begin{equation}\label{da8840}
f_5: M_{2m}\longrightarrow M_m\otimes M_m\end{equation}

Let $\{v_0,v_1\}$ be a set of basis of $V$ over $\ca{C}$. For
$i\in I$, let $x_i$ be the point on $\ca{P}_{\sca{C}}^1$, such
that
\begin{equation}\label{da8880}\frac{v_1}{v_0}(x_i)=\varrho_i
\end{equation}  For $i\in I$ and $r>0$,
let $U_{x_i}(r)$ be the open set determined by
$\left|\frac{v_1}{v_0}-\varrho_i\right|<r$ on
$\ca{P}_{\sca{C}}^1$. By the definition of $f_5$, we have
\begin{equation}\label{da8910}
f_5\left(v_0^mv_1^m\right)=\sum_{l=0}^{m}b_{m,l}\cdot
v_0^{l}v_1^{m-l} \otimes v_0^{m-l}v_1^{l}\end{equation} where
$b_{m,l}\in\ca{Q}$. For point $x\in \ca{P}_{\sca{C}}^1$, consider
the function
\begin{equation}\label{da8909}
f_{4,x}\left(v_0,v_1,m\right)=\sum_{l=0}^{m}b_{m,l}\cdot
\frac{v_1^{-l}}{v_0^{-l}}(x)\cdot \frac{v_1^{l}}{v_0^{l}}
\end{equation} over $\ca{P}_{\sca{C}}^1$.
 Then there exists $r_1,\rho_1\in (0,1)$ determined
by $\{\varrho_i:i\in I\}$, such that
\begin{equation}\label{da8908}
\left|f_{4,x}\left(v_0,v_1,m\right)(x')\right|<\rho_1^m
\end{equation} for all $m>0$ and
all $x\in U_{x_i}(r_1)$ and $x'\in U_{x_j}(r_1)$, where $i,j\in I$
satisfying $i\neq j$. And we have
\begin{equation}\label{da8907}
\left|f_{4,x}\left(v_0,v_1,{m}\right)(x')-1\right|\leq
2m\cdot\left|\frac{v_{1}}{v_0}(x')-\frac{v_{1}}{v_0}(x)\right|
\end{equation}
for all $i\in I$ and $x,x'\in U_{x_i}(r_1)$ satisfying
$\left|\frac{v_{1}}{v_0}(x')-\frac{v_{1}}{v_0}(x)\right|<\frac{1}{3m}$.

Let $z$ be an analytic function on $\bigcup_{i\in I}U_{x_i}(r_1)$,
such that $dz(x)\neq 0$ for all $x\in \bigcup_{i\in
I}U_{x_i}(r_1)$ satisfying $|z(x)|<r_0$, where $r_0$ is a positive
number. Let $U(r_0)$ be the open subset in $\bigcup_{i\in
I}U_{x_i}(r_1)$ determined by $|z(x)|<r_0$, for all $x\in
\bigcup_{i\in I}U_{x_i}(r_1)$. Let $\varphi_2$ be the map from
$U(r_0)$ to the complex plane defined by $z$. Let $U_0$ be the
open disc on the complex plane defined by $\left|z\right|<r_0$.
Assume there exists a unique point $x\in U_{x_i}(r_1)$, such that
$z(x)=\alpha$ for all $i\in I$ and $\alpha\in U_0$. Let
$\{u_j',e_j':j\in I_1\}$ be a set of analytic functions on
$U(r_0)$, such that
\begin{equation}\label{da8890}\sum_{j\in I_1}u_j'\cdot e_j'=0
\end{equation} Let
$\omega'= \sum_{j\in I_1}u_j'\cdot de_j'$. Assume $|u_j'|$,
$|e_j'|$, $\left|{\partial e_j'}/{\partial z}\right|$,
$\left|{\partial \frac{v_1}{v_0}}/{\partial z}\right|$ are less
than  $B'\cdot \left|\frac{\omega'}{dz}\right|$ over $U(r_0)$ for
all $j\in I_1$, where $B'>1$.

For $x\in U(r_0)$, let $G_x$ be the function on $U_0$ defined by
the following:
\begin{equation}\label{da8930}
G_x=\sum_{j\in I_1}\sum_{l=0}^{m}b_{m,l}\cdot
\frac{v_1^{-l}}{v_0^{-l}}(x)\cdot u_j'(x)\cdot \frac{z\cdot
\partial}{\partial z}{\rm Trace}_{\varphi_2}\left(\frac{
v_1^{l}}{v_0^{l}}\cdot e_j'\right)\end{equation}  Then by
(\ref{da8908}) (\ref{da8907}), there exists $\rho_2\in
(0,\rho_1)$, and $a_3,a_4>1$ which are determined by
$\rho_2,n_1,B',\omega'$, such that
\begin{equation}\label{da8940} \left|G_x(x')-\frac{\omega'}{dz}(x)
\cdot {z(x')}\right|\leq \left(a_4m\cdot \left|z(x')\right|^2
+n_1\cdot\rho_2^m\cdot
\left|z(x')\right|\right)\cdot\left|\frac{\omega'}{dz}(x)\right|
\end{equation} for all $x\in
U(r_0)$ and $m>a_3$ and  $x'\in U_0$ satisfying
\begin{equation}\label{da8931}
\left|z(x')-z(x)\right|<\frac{1}{3m}
\end{equation}
So $G_x$ can approximate an antiderivative of $\omega'$ locally
around point $x$, when $m$ is sufficiently large. Note this fact
is implied by the existence of $w_{x_i}$ discussed in the
paragraph (\ref{da8800})-(\ref{da8810}). Next we study the
algebraic and geometric implications of $G_x$ over  $X$.

Let $\omega$ be the relative differential on $X$ defined before.
Let
\begin{equation}\label{da9211}
s_\omega=\sum_{i\in I_1}\frac{u_i}{\eta^{m_2}}\otimes
\frac{e_i}{\eta^{m_3}}\end{equation} where $u_i,e_i$ are defined
before. Let $\xi_1,\xi_2$ be the sections of ${\cal
O}_{X}(n_1E_P),{\cal O}_{X}(n_2E_P)$ that satisfy the properties
(\trm{1})-(\trm{4}), where $C_1$ and $C_2$ are assumed to be zero.
Let $\ca{P}_Y^1$ be the projective line over $Y$.

Let
\begin{equation}\label{da8101} z=\frac{\xi_1}{
\|\xi_1\|\cdot\eta^{n_1}}\end{equation}
\begin{equation}\label{da6997}
\tau=\frac{\|\xi_2\| \cdot\eta^{n_2}}{ \xi_2}\end{equation} on
$X_{\sca{C}}$. Since $C_1=0$, so $\xi_1$ does not vanish at any
point on $E_P$, hence there exists a morphism
\begin{equation}\label{da0300}\varphi_2:X
\longrightarrow \ca{P}_Y^1
\end{equation} defined by
$[\xi_1:\eta^{n_1}]$. Let $\psi$ be the automorphism of
$\ca{P}_Y^1$ such that
\begin{equation}\label{da0310}\psi^*\left(\frac{\xi_1}{\eta^{n_1}}
\right)=\frac{\eta^{n_1}}{\xi_1}
\end{equation}
Let $G_2$ be the rational function on $X_{\sca{C}}$ defined by
\begin{equation}G_2(s_{\omega},
\tau,z)=\sum_{i\in I_1}\sum_{l=0}^{m}b_{m,l}\cdot \tau^{-l}\cdot
\frac{u_i}{\eta^{m_2}}\cdot
\varphi_2^*\left(\psi^*\left(\frac{z\cdot\partial}{\partial z}{\rm
Trace}_{\varphi_2}\left( \tau^{l} \cdot
\frac{e_i}{\eta^{m_3}}\right)\right)
\right)\label{da8130}\end{equation}

For complex embedding $\sigma:F\hookrightarrow\ca{C}$, let
$\Gamma_\sigma$ be the path determined by
$\left|\frac{\xi_1}{\eta^{n_1}}\right|=1$ on $X_\sigma$. Then
there exists an element in $R$ that is equal to the following:
\begin{eqnarray}\frac{(2m)!}{m!\cdot m!}\cdot
\frac{1}{2\pi\sqrt{-1}}\cdot \int_{\Gamma_{\sigma}} \omega\cdot
G_2 \label{da0502}\end{eqnarray} under the complex embedding
$\sigma$ for all $\sigma\in S$.

The reason is the following: Let $D_i$ be the divisor determined
by $\xi_i=0$ as a section of ${\cal O}_X(n_iE_P)$ on $X$ for
$i=1,2$. Let $D_3$ be the image of $D_2$ in $\ca{P}_Y^1$ under map
$\varphi_2$. Since $C_2=0$, so $D_2$ does not intersect with
$E_P$, therefore $D_3$ does not intersect with the image of $E_P$
in $\ca{P}_Y^1$ under map $\varphi_2$. So there exists a
polynomial
\begin{equation}\label{da0510}q=\sum_{j=0}^{n_2}
c_j\cdot\frac{\xi_1^j}{\eta^{jn_1}}
\end{equation} that vanishes along $D_3$, where $c_j\in
R$ and $c_{n_2}$ is a unit in $R$. So we have
\begin{equation}\label{da0520} {\rm Trace}_{\varphi_2}
\left(\frac{\eta^{n_2l}}{\xi_2^l}\cdot\frac{e_i}{\eta^{m_3}
}\right)=q^{-l}\cdot\left(\sum_{j=k_{i,l,1}}^{k_{i,l,2}}
c_{i,l,j}\cdot\frac{\xi_1^j}{\eta^{jn_1}}\right)
\end{equation} for all $0\leq l\leq m$ and $i\in I_1$,
where $k_{i,l,1},k_{i,l,2}$ are integers and $c_{i,l,j}\in R$. So
\begin{equation}\label{da0521}
\frac{z\cdot \partial}{\partial z}{\rm Trace}_{\varphi_2}
\left(\frac{\eta^{n_2l}}{\xi_2^l}\cdot\frac{e_i}{\eta^{m_3}
}\right)=q^{-l-1}\cdot\left(\sum_{j=k_{i,l,1}'
}^{k_{i,l,2}'}c_{i,l,j}'
\cdot\frac{\xi_1^j}{\eta^{jn_1}}\right)\end{equation} for all
$0\leq l\leq m$, where $k_{i,l,1}',k_{i,l,2}'$ are integers and
$c_{i,l,j}'\in R$. Therefore \begin{eqnarray}\lefteqn{
\psi^*\left( \frac{z\cdot
\partial}{\partial z}{\rm Trace}_{\varphi_2}
\left(\frac{\eta^{n_2l}}{\xi_2^l}\cdot\frac{e_i}{\eta^{m_3}
}\right)\right)=}\nonumber\\
&&\frac{\sum_{j=k_{i,l,1}'}^{k_{i,l,2}'}c_{i,l,j}'
\cdot\frac{\eta^{jn_1}}{\xi_1^j}}{\left( \sum_{j=0}^{n_2}
c_j\cdot\frac{\eta^{jn_1}}{\xi_1^j}\right)^{l+1}}
=\frac{\sum_{j=k_{i,l,1}'}^{k_{i,l,2}'}c_{i,l,j}'
\cdot\frac{\eta^{(j-n_2l-n_2)n_1}}{\xi_1^{j-n_2l-n_2}}}{\left(
\sum_{j=0}^{n_2} c_j\cdot\frac{\xi_1^{n_2-j}}{\eta^{(n_2-j)
n_1}}\right)^{l+1}}\label{da0530}\end{eqnarray} Since $c_{n_2}$ is
a unit in $R$, so the divisor determined by $\sum_{j=0}^{n_2}
c_j\cdot\frac{\xi_1^{n_2-j}}{\eta^{(n_2-j) n_1}}$ on $X$ does not
intersect with $D_1$. Moreover $E_P$ does not intersect with $D_1$
on $X$, so the residue
\begin{equation}\label{da0540}{\rm Res}_{D_1/Y}
\left(\omega\cdot \sum_{i\in
I_1}\frac{\xi_2^l}{\eta^{n_2l}}\cdot\frac{u_i}{\eta^{m_2}}\cdot
\varphi_2^*\left(\psi^*\left( \frac{z\cdot\partial}{\partial
z}{\rm Trace}_{\varphi_2}\left( \frac{\eta^{n_2l}}{\xi_2^l}\cdot
\frac{e_i}{\eta^{m_3}}\right)\right) \right)\right)
\end{equation} is an element in $R$ for all $0\leq l\leq m$.
Moreover,
\begin{eqnarray}\lefteqn{\frac{1}{2\pi\sqrt{-1}}\cdot
\int_{\Gamma_{\sigma}}
 \omega\cdot
\sum_{i\in I_1}\tau^{-l}\cdot\frac{u_i}{\eta^{m_2}}\cdot
\varphi_2^*\left(\psi^*\left(\frac{z\cdot\partial}{\partial z}
{\rm Trace}_{\varphi_2}\left( \tau^{l} \cdot
\frac{e_i}{\eta^{m_3}}\right)\right)
\right)}\nonumber\\
&=&{\rm Res}_{D_1/Y} \left(\omega\cdot \sum_{i\in
I_1}\frac{\xi_2^l}{\eta^{n_2l}}\cdot\frac{u_i}{\eta^{m_2}}\cdot
\varphi_2^*\left(\psi^*\left(\frac{z\cdot\partial}{\partial z}
{\rm Trace}_{\varphi_2}\left( \frac{\eta^{n_2l}}{\xi_2^l}\cdot
\frac{e_i}{\eta^{m_3}}\right)\right)
\right)\right)\nonumber\\
&&\label{da0560}\end{eqnarray} under $\sigma$. By (\ref{da0560})
and $\frac{(2m)!}{m!\cdot m!}\cdot b_{m,l}$ is an integer for all
$0\leq l\leq m$, so there exists an element in $R$ that is equal
to (\ref{da0502}) under the complex embedding $\sigma$.

Let $I_{2,\sigma}, U_{i,\sigma}$ be the elements defined in
condition (\trm{2}). Assume $B\cdot\left|\frac{\omega}{dz}\right|$
is greater than $\left|\frac{u_j}{\eta^{m_2}}\right|$,
$\left|\frac{e_j}{\eta^{m_3}}\right|$, $
\left|\frac{\partial}{\partial z}\frac{e_j}{\eta^{m_3}}\right|$
and $\left|\frac{\partial\tau}{\partial z}\right|$ on
$U_{i,\sigma}$ for all $i\in I_{2,\sigma}$ and $j\in I_1$, where
$B>1$. For $x\in U_{i,\sigma}$, let $G_x(s_{\omega}, \tau,z)$ be
the rational function on $\ca{P}_Y^1\otimes_\sigma\ca{C}$ defined
by
\begin{equation}\label{da0910} G_x(s_{\omega}, \tau,z)=\sum_{i\in
I_1}\sum_{l=0}^{m}b_{m,l}\cdot \tau(x)^{-l}\cdot
\frac{u_i}{\eta^{m_2}}(x)\cdot \frac{z\cdot\partial}{\partial
z}{\rm Trace}_{\varphi_2}\left( \tau^{l} \cdot
\frac{e_i}{\eta^{m_3}} \right)\end{equation} Then by
(\ref{da8940}) and condition (\trm{3}) satisfied by $\xi_2$, there
exists $\rho_2\in (0,\rho_1)$, and positive numbers $a_5,a_6$
determined by $\rho_2,B,n_1,X_{\sca{C}}$, such that
\begin{equation}\label{da0703} \left|G_x(s_{\omega}, \tau,z)(x')
-\frac{\omega}{dz}(x) \cdot {z(x')}\right|\leq \left(a_5m\cdot
\left|z(x')\right|^2 +n_1\cdot\rho_2^m\cdot
\left|z(x')\right|\right)\cdot\left|\frac{\omega}{dz}(x)\right|
\end{equation}
for all $m>a_6$ and $x'\in \ca{P}_Y^1\otimes_\sigma\ca{C}$
satisfying $\left|z(x')-z(x)\right|<\frac{3}{\|\xi_1\|}$, when
$\left|\frac{\xi_1}{\eta^{n_1}}(x)\right|<2$ and $\|\xi_1\|$ is
large enough.

 Assume
\begin{equation}\label{da0710}
\frac{\omega}{dz}(x_{i,\sigma})=\beta_{i,\sigma}\end{equation} for
$i\in I_{2,\sigma}$, where $\beta_{i,\sigma}\in\ca{C}$. For $i\in
I_{2,\sigma}$, let $\Gamma_{\sigma,i}$ be the path on
$U_{i,\sigma}$ determined by
$\left|\frac{\xi_1}{\eta^{n_1}}\right|=1$. For $x\in
\Gamma_{\sigma,i}$, by (\ref{da0703}), $\varphi_2^*(G_x)$ is
approximately equal to $\beta_{i,\sigma}\cdot z$ on
$\Gamma_{\sigma,i}$, when $\|\xi_1\|$ and $m$ are sufficiently
large. Since we  have
\begin{equation}\label{da0900}G_2(x)=\varphi_2^*(\psi^*(G_x))(x)
\end{equation} so $G_2$ is approximately
equal to $\beta_{i,\sigma}\cdot {\overline{z}}$ on
$\Gamma_{\sigma,i}$, i.e.\ $dG_2$ is approximately equal to
$\beta_{i,\sigma}\cdot \overline{\beta_{i,\sigma}^{-1}}\cdot
\overline{\omega}$ on $\Gamma_{\sigma,i}$, when $\|\xi_1\|$ and
$m$ are sufficiently large. So we will have
$\frac{1}{2\pi\sqrt{-1}}\cdot \int_{\Gamma_{\sigma,i}} \omega\cdot
G_2 $ is approximately equal to $\frac{\beta_{i,\sigma}^2}{
\|\xi_1\|^2}$, when $\|\xi_1\|$ and $m$ are sufficiently large.
Since there exists an element in $R$ that is equal to
(\ref{da0502}) under complex embedding $\sigma$, so we have
\begin{equation}\label{da0920}
\sum_{\sigma\in S}\log\left|2\sum_{i\in
I_{2,\sigma}}{\beta_{i,\sigma}^2}\right|> {[F:\ca{Q}]}\cdot
\left(\log \frac{m!\cdot m!}{(2m)!}+\log \|\xi_1\|^2\right)
\end{equation} when $\sum_{i\in
I_{2,\sigma}}{\beta_{i,\sigma}^2}$ is nonzero and $\|\xi_1\|,m$
are sufficiently large. From this we will have the following
theorem:

\begin{theorem}\label{da0810} Assume
\begin{equation}\label{da0812}
\left|\sum_{i\in
I_{2,\sigma}}{\beta_{i,\sigma}^2}\right|>a_9\cdot\|\omega\|^2
\end{equation} for all $\sigma\in S$, where $a_9$
is a positive number and $\|\omega\|$ denotes the canonical norm
[L1] of $\omega$. Then there exists $a_{7},a_8>1$ determined by
$a_9,B,n_1,X_{\sca{C}}$, such that
\begin{equation}\label{da0815}
\log\|\xi_1\|<a_7 +\frac{1}{2}\cdot\log\frac{2\sum_{\sigma\in
S}\left|\sum_{i\in
I_{2,\sigma}}{\beta_{i,\sigma}^2}\right|}{[F:\ca{Q}]}
\end{equation} when $\|\xi_1\|>a_8$.
\end{theorem}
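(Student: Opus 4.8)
The plan is to read off the theorem from inequality~(\ref{da0920}), which is already in hand, after arranging that the auxiliary degree $m$ may be chosen to be an absolute constant rather than a quantity growing with $\|\xi_1\|$. So the first step is to look back at why~(\ref{da0920}) carries the clause ``$\|\xi_1\|,m$ sufficiently large''. In its derivation one applies~(\ref{da0703}) on each loop $\Gamma_{\sigma,i}\subset U_{i,\sigma}$, on which $|z|=1/\|\xi_1\|$, to conclude that the integral $\frac{1}{2\pi\sqrt{-1}}\int_{\Gamma_{\sigma}}\omega\cdot G_2$ differs from $\sum_{i\in I_{2,\sigma}}\beta_{i,\sigma}^2/\|\xi_1\|^2$ by an error that, as a consequence of~(\ref{da0703}), is bounded by a constant $C=C(B,n_1,X_{\sca{C}})$ times $\left(m/\|\xi_1\|+\rho_2^m\right)\cdot\|\omega\|^2/\|\xi_1\|^2$ (the weight $\|\omega\|^2$ entering because all the $G_x$--estimates are scaled by the size of $\omega$ and because $\sum_{i\in I_{2,\sigma}}|\beta_{i,\sigma}|^2$ is itself $O(\|\omega\|^2)$). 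The ``$m$ sufficiently large'' requirement is exactly that this error be dominated by the main term, i.e.\ that $C\left(m/\|\xi_1\|+\rho_2^m\right)\|\omega\|^2<\left|\sum_{i\in I_{2,\sigma}}\beta_{i,\sigma}^2\right|$ for every $\sigma$, and the only quantity in it not already controlled by $a_9,B,n_1,X_{\sca{C}}$ is the ratio $\|\omega\|^2/\left|\sum_{i\in I_{2,\sigma}}\beta_{i,\sigma}^2\right|$. Hypothesis~(\ref{da0812}) bounds precisely this ratio by $1/a_9$, and (its left side being positive) also guarantees $\sum_{i\in I_{2,\sigma}}\beta_{i,\sigma}^2\neq0$, the other standing hypothesis of~(\ref{da0920}). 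Hence one may fix an integer $m=m_0$, depending only on $a_9,B,n_1,X_{\sca{C}}$, large enough that $C\rho_2^{m_0}<a_9/2$ --- possible since $\rho_2<1$ --- so that for all $\|\xi_1\|$ larger than some $a_8=a_8(a_9,B,n_1,X_{\sca{C}})$, which also absorbs the condition $Cm_0/\|\xi_1\|<a_9/2$, inequality~(\ref{da0920}) holds with $m=m_0$.

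With $m=m_0$ fixed, (\ref{da0920}) reads
\[
\sum_{\sigma\in S}\log\left|2\sum_{i\in I_{2,\sigma}}\beta_{i,\sigma}^2\right|>[F:\ca{Q}]\left(\log\frac{m_0!\cdot m_0!}{(2m_0)!}+2\log\|\xi_1\|\right).
\]
Since $S$ is the set of all complex embeddings of $F$, we have $|S|=[F:\ca{Q}]$; so concavity of the logarithm (equivalently the AM--GM inequality for the nonnegative reals $\left|2\sum_{i\in I_{2,\sigma}}\beta_{i,\sigma}^2\right|$, $\sigma\in S$) gives
\[
\sum_{\sigma\in S}\log\left|2\sum_{i\in I_{2,\sigma}}\beta_{i,\sigma}^2\right|\leq[F:\ca{Q}]\cdot\log\frac{2\sum_{\sigma\in S}\left|\sum_{i\in I_{2,\sigma}}\beta_{i,\sigma}^2\right|}{[F:\ca{Q}]}.
\]
Combining the last two displays and dividing by $[F:\ca{Q}]$ yields
\[
2\log\|\xi_1\|<\log\frac{(2m_0)!}{m_0!\cdot m_0!}+\log\frac{2\sum_{\sigma\in S}\left|\sum_{i\in I_{2,\sigma}}\beta_{i,\sigma}^2\right|}{[F:\ca{Q}]},
\]
which is~(\ref{da0815}) with $a_7=1+\frac{1}{2}\log\frac{(2m_0)!}{m_0!\cdot m_0!}$. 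Since $m_0$ depends only on $a_9,B,n_1,X_{\sca{C}}$, so do $a_7$ and $a_8$, as required.

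The substantive work --- the construction of $G_2$ and the proof of~(\ref{da0920}) --- is complete before this point, so the one place that needs care is the first step: showing that $m$ may be held fixed. This is precisely the service rendered by hypothesis~(\ref{da0812}). Without an a priori lower bound for $\left|\sum_{i\in I_{2,\sigma}}\beta_{i,\sigma}^2\right|$ in terms of $\|\omega\|^2$, the exponentially small term $\rho_2^m$ would have to be forced below a quantity that is itself uncontrolled, which would make the necessary $m$ --- and with it the binomial coefficient $(2m)!/(m!\cdot m!)$ appearing in $a_7$ --- depend on $\omega$ in an uncontrolled way; it is exactly because~(\ref{da0812}) is assumed that $a_7$ can be taken to be a genuine constant. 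Everything after fixing $m_0$ is the elementary rearrangement of logarithms above.
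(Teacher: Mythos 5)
Your proposal is correct and follows essentially the same route as the paper's own proof: use hypothesis (\ref{da0812}) together with the error estimate of Lemma \ref{da0700} to make the main term $\sum_i\beta_{i,\sigma}^2/\|\xi_1\|^2$ dominate, invoke the integrality statement of Lemma \ref{da0500} (the content of (\ref{da0920})) to get the product-formula lower bound $[F:\ca{Q}]\log\frac{m!\cdot m!}{(2m)!}$, fix $m$ as a constant, and finish with the concavity/AM--GM rearrangement and the bound $\frac{(2m)!}{m!\cdot m!}<2^{2m}$. If anything, you are slightly more careful than the paper in making explicit that the fixed $m_0$ must also be large enough (depending on $a_9$) to force $\rho_2^{m_0}$ below the margin supplied by (\ref{da0812}), a dependence the paper leaves implicit when it takes $m$ to be the least integer exceeding $a_6$.
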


The detail of the proof of {\bf Theorem \ref{da0810}} is written
in the last three pages of this paper. Note when $\|\omega\|$ is
large enough, $B$ can be chosen to be a constant determined by
$n_1,X_{\sca{C}}$. And when $\|\xi_1\|,\|\omega\|$ are large
enough, $\xi_1$ and $\omega$ can be chosen such that the
conditions required by {\bf Theorem \ref{da0810}} are satisfied.
Note
\begin{equation}\label{da0931} \frac{\sum_{\sigma\in
S}\left|\sum_{i\in
I_{2,\sigma}}{\beta_{i,\sigma}^2}\right|}{[F:\ca{Q}]}\leq
\|\omega\|^2\cdot O(1)\end{equation} And when $C_1=0$, we have
\begin{equation}\label{da0930}\log \|\xi_1\|=
\frac{n_1}{[F:\ca{Q}]}\omega_{X/Y}\cdot E_P+O(1)
\end{equation} So  (\ref{da0815}) will imply
an upper bound for $\omega_{X/Y}\cdot E_P$ determined by $n_1,
X_{\sca{C}}$, when $n_1$ is large enough under the conditions
(\trm{1})-(\trm{4}), where $C_1,C_2$ are assumed to be zero.

\section{The Proof}
Firstly we want to prove function $G_x$ constructed in the
introduction has the property (\ref{da8940}).

\begin{lemma}\label{da3098}
Let $V,v_0,v_1,\ca{P}_{\sca{C}}^1,M_m,f_5,
b_{m,l},n_1,\{\varrho_i:i\in
I\},x_i,U_{x_i}(r),\ca{P}_{\sca{C}}^1$ be the elements defined in
the introduction.  Let $m$ be a positive integer. For $x\in
\ca{P}_{\sca{C}}^1$, let $f_{4,x}\left(v_0,v_1,{m}\right)$ be the
rational function on $\ca{P}_{\sca{C}}^1$ defined by the
following:
\begin{equation}\label{da7021}
f_{4,x}\left(v_0, v_1,m\right)=\sum_{l=0}^{m}b_{m,l}\cdot
\frac{v_1^{-l}}{v_0^{-l}}(x)\cdot \frac{v_1^{l}}{v_0^{l}}
\end{equation}

Then there exists $r_1,\rho_1\in (0,1)$ that satisfies the
following:
\begin{enumerate}\setcounter{rom}{0}\item[\rmn]
For all $i,j\in I$ satisfying $i\neq j$, and for all $x\in
U_{x_i}(r_1)$ and $x'\in U_{x_j}(r_1)$, we have
\begin{equation}\label{da7022}
\left|f_{4,x}\left(v_0,v_1,{m}\right)(x')\right|<\rho_1^m
\end{equation} for all $m>0$.
\item[\rmn] For all $i\in I$ and $x',x\in U_{x_i}(r_1)$ and for
all $m>0$, we have
\begin{equation}\label{da7024}
\left|f_{4,x}\left(v_0,v_1,{m}\right)(x')-1\right|\leq 2m\cdot
\left|\frac{v_{1}}{v_0}(x')-\frac{v_{1}}{v_0}(x)\right|
\end{equation} when $\left|\frac{v_{1}}{v_0}(x')-\frac{v_{1}}{v_0}(x)
\right|<\frac{1}{3m}$.
\end{enumerate}
\end{lemma}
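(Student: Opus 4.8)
The plan is to understand the function $f_{4,x}$ explicitly via the map $f_5$. Recall $f_5: M_{2m}\to M_m\otimes M_m$ arises by restricting the multiplication map on $V^{\otimes 2m}$ through the symmetrization isomorphism $S^{2m}(V)\cong M_{2m}$. The key observation is that the element $v_0^m v_1^m\in M_{2m}$ corresponds to a symmetrized tensor, and applying $f_5$ produces a sum $\sum_l b_{m,l}\, v_0^l v_1^{m-l}\otimes v_0^{m-l}v_1^l$. Setting $t=v_1/v_0$, the function $f_{4,x}(v_0,v_1,m)$ becomes a Laurent polynomial $\sum_{l=0}^m b_{m,l}\, t(x)^{-l} t(x')^l$ in the two affine coordinates. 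So the first step is to compute the $b_{m,l}$: since symmetrization of $\prod_{i=1}^{2m}w_i$ over the $2m$ symbols, split into two groups of $m$, gives a multinomial-type coefficient, I expect $b_{m,l} = \binom{m}{l}^2 / \binom{2m}{m}$ up to normalization (this is consistent with the statement that $\frac{(2m)!}{m!m!}b_{m,l}$ is an integer). Granting this, $f_{4,x}(v_0,v_1,m)(x') = \binom{2m}{m}^{-1}\sum_{l=0}^m \binom{m}{l}^2 (t'/t)^l$ where $t=t(x)$, $t'=t(x')$, which is a Legendre/Jacobi-type polynomial in the ratio $s = t'/t$.

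The second step is the estimate for part (i). With $s = t(x')/t(x)$, we have $f_{4,x}(\cdots)(x') = P_m(s)$ where $P_m(s)=\binom{2m}{m}^{-1}\sum_l \binom{m}{l}^2 s^l$. When $x\in U_{x_i}(r_1)$ and $x'\in U_{x_j}(r_1)$ with $i\neq j$, the ratio $s$ is bounded away from $1$ (and from the positive real axis near $1$) because $t(x)$ is near $\varrho_i$ and $t(x')$ is near $\varrho_j$, two distinct $n_1$-th roots of unity; choosing $r_1$ small enough in terms of $\min_{i\neq j}|\varrho_i-\varrho_j|$ keeps $s$ in a fixed compact region avoiding a neighborhood of $1$ in $\mathbb{C}$. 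The generating function identity $\sum_m P_m(s)\binom{2m}{m} x^m$-type asymptotics, or more elementarily the bound $P_m(s) = \binom{2m}{m}^{-1}(1+\sqrt s)^{2m}\cdot(\text{something})$... actually the cleanest route: $\sum_l \binom{m}{l}^2 s^l$ is the coefficient extraction giving $|P_m(s)| \le \binom{2m}{m}^{-1}(1+|s|^{1/2})^{2m}$ is false in general, so instead I would use that $P_m(s)$ is (up to normalization) the Legendre polynomial evaluated at $(1+s)/(1-s)\cdot$(normalization), whose growth rate is governed by the larger root of a quadratic; the point is that for $s$ in the relevant compact set the exponential growth rate $\limsup |P_m(s)|^{1/m}$ is a continuous function of $s$ strictly less than... hmm, it can exceed $1$. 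The honest fix: compare $|P_m(s)|$ against the value at the worst $s$, and absorb: choose $\rho_1$ as the maximum over the compact $s$-region of $\limsup|P_m(s)|^{1/m}$, divided by the growth at $s$ near $1$ which is the dominant balance — this needs care and is the main obstacle.

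The main obstacle, then, is establishing the uniform geometric decay $|f_{4,x}(v_0,v_1,m)(x')| < \rho_1^m$ in part (i) with a single $\rho_1<1$ working for all $m$ and all admissible $x,x'$. I would handle this by writing $f_{4,x}(\cdots)(x')$ as a contour integral (Cauchy coefficient formula for the symmetrization) over $|\zeta|=1$, namely $\binom{2m}{m}^{-1}\oint (1 + \zeta)^m(1+s/\zeta)^m \frac{d\zeta}{2\pi i\zeta}$-style, then apply the saddle-point/Laplace method: the saddle is at $\zeta=\sqrt s$, giving leading behavior $\sim \binom{2m}{m}^{-1}(1+\sqrt s)^{2m}$, and since $\binom{2m}{m}\sim 4^m/\sqrt{\pi m}$ while $(1+\sqrt s)^{2m}$ has modulus $\le 4^m$ exactly when $\sqrt s$ lies in the closed unit disk with equality only at $s=1$, we get genuine exponential decay away from $s=1$, uniformly on compacta avoiding $s=1$. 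That pins down $\rho_1$ in terms of how far the region $U_{x_i}(r_1)\times U_{x_j}(r_1)$ ($i\ne j$) forces $s$ from $1$, hence in terms of $\{\varrho_i\}$ and $r_1$; shrinking $r_1$ if necessary makes this work.

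For part (ii) — the case $x,x'\in U_{x_i}(r_1)$, same $i$ — the ratio $s=t(x')/t(x)$ is now close to $1$. Writing $s = 1+\delta$ with $|\delta|$ comparable to $|t(x')-t(x)|$, I expand $f_{4,x}(\cdots)(x') = P_m(1+\delta) = \sum_l b_{m,l}(1+\delta)^l$. Since $\sum_l b_{m,l}=P_m(1)=1$ (normalization: the identity map composed correctly, or directly $\binom{2m}{m}^{-1}\sum\binom{m}{l}^2=1$ by Vandermonde), we get $f_{4,x}(\cdots)(x') - 1 = \sum_l b_{m,l}\big((1+\delta)^l-1\big)$, and $|(1+\delta)^l - 1|\le l|\delta|(1+|\delta|)^{l-1}\le l|\delta|\,e^{l|\delta|}$. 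With $|\delta|\le C|t(x')-t(x)| < C/(3m)$ one bounds $l|\delta|e^{l|\delta|}\le m|\delta|\cdot e^{1/3}\le 2m|\delta|$ roughly, and summing against $\sum_l b_{m,l}=1$ yields $|f_{4,x}(\cdots)(x')-1|\le 2m|t(x')-t(x)| = 2m|\frac{v_1}{v_0}(x')-\frac{v_1}{v_0}(x)|$, provided the constant absorbed into "$|\delta|$ comparable to $|t(x')-t(x)|$" and the $e^{1/3}$ factor are controlled by shrinking $r_1$ and possibly adjusting; this is the routine part. The only subtlety is bounding $|\delta| = |t(x')/t(x) - 1| = |t(x')-t(x)|/|t(x)|$ and noting $|t(x)|$ is close to $|\varrho_i|=1$ on $U_{x_i}(r_1)$, so $|\delta|\le (1+o(1))|t(x')-t(x)|$, absorbable. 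I would therefore choose $r_1$ small enough to make both (i) (the saddle-point decay region) and (ii) (the $|t(x)|\approx 1$ estimate) hold simultaneously, and set $\rho_1$ accordingly.
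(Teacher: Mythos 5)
Your proposal is essentially correct, but it proves the lemma by a genuinely different route than the paper. The paper never writes $b_{m,l}$ in closed form: it passes to a unitary basis $v_{x,0},v_{x,1}$ adapted to $x$ (so that $v_{x,1}(x)=0$), computes $f_5$ on the monomials $v_{x,0}^{2m-l}v_{x,1}^{l}$ as in (\ref{da9801})--(\ref{da3340}), bounds the surviving coefficients by $2^{-l}$ (with a sharper bound for $l\geq m/2$), and finishes with an elementary case analysis over $U_{x_j}(r_1)$ according to whether $|v_{x,1}/v_{x,0}|\leq 2$ or $\geq 2$, applied to the expansion of $\left(\varrho v_{x,0}^2+(1-|\varrho|^2)v_{x,0}v_{x,1}-\overline{\varrho}v_{x,1}^2\right)^m$. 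You instead identify $b_{m,l}=\frac{m!\cdot m!}{(2m)!}\cdot\left(\frac{m!}{l!\cdot (m-l)!}\right)^2$ (correct, and consistent with (\ref{da9801}) at $l=m$), reduce $f_{4,x}(v_0,v_1,m)(x')$ to the one-variable polynomial $P_m(s)=\sum_{l=0}^m b_{m,l}s^l$ with $s=\frac{v_1}{v_0}(x')\cdot\frac{v_0}{v_1}(x)$, and prove (i) by a Cauchy-coefficient/saddle-point estimate and (ii) from $P_m(1)=1$ together with $|(1+\delta)^l-1|\leq l|\delta|(1+|\delta|)^{l-1}$. Your part (ii) is complete and in fact more explicit than the paper's (which essentially asserts the bound on $u$ after (\ref{da0120})); your reduction also makes the dependence of $\rho_1$ on the minimal separation of the $\varrho_i$ (hence on $n_1$) transparent, whereas the paper's computation is longer but entirely elementary and yields bounds valid verbatim for every $m$.

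Two points in your part (i) must be tightened. First, the decay claim has to be stated for the right region: the exponential growth rate of $|P_m(s)|$ is $\max\left(|1+\sqrt{s}|^2,|1-\sqrt{s}|^2\right)/4$, coming from the two saddles $\zeta=\pm 1/\sqrt{s}$ of your contour integral, and this is not $<1$ on every compactum avoiding $s=1$ (for real $s>1$, e.g.\ $s=4$, it exceeds $1$); it is $<1$ precisely where both $|1\pm\sqrt{s}|<2$, a region containing the unit circle minus a neighborhood of $s=1$. Since in the lemma $s$ lies within $O(r_1)$ of a nontrivial $n_1$-th root of unity, your argument does live inside that region once $r_1$ is small, but the blanket phrase ``uniformly on compacta avoiding $s=1$,'' and the bound by the single saddle value $|1+\sqrt{s}|^{2m}$, are not correct as stated. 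Second, the lemma requires $|f_{4,x}(v_0,v_1,m)(x')|<\rho_1^m$ for all $m>0$, while the saddle-point method gives the rate only for large $m$; you need the easy supplement that for each of the finitely many remaining $m$ one has $\sup|P_m(s)|<1$ on the relevant compact set of $s$ (strict triangle inequality at $|s|=1$, $s\neq 1$, plus continuity, shrinking $r_1$ if necessary), and then take $\rho_1$ to be the maximum of the asymptotic rate and the finitely many numbers $\sup|P_m(s)|^{1/m}$. With these repairs your proof goes through.
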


\begin{proof}
Let $x\in\ca{P}_{\sca{C}}^1$ be a closed point. Assume
\begin{equation}\label{da1000}\frac{v_1}{v_0}(x)=\varrho
\end{equation} where $\varrho\in \ca{C}$.
Let $v_{x,1}$ and $v_{x,0}$ be the elements in $V$, such that
\begin{equation}\label{da8342}v_{x,1}=\frac{1}{\left(1+\left|
\varrho\right|^2\right)^{\frac{1}{2}}}\cdot\left( v_1-\varrho\cdot
v_0\right)
\end{equation}
\begin{equation}\label{da8343}v_{x,0}=
\frac{1}{\left(1+\left|\varrho
\right|^2\right)^{\frac{1}{2}}}\cdot\left( \overline{\varrho}\cdot
v_1+v_0\right)
\end{equation} on $\ca{P}_{\sca{C}}^1$.
By (\ref{da8342}) (\ref{da8343}), we have
\begin{equation}v_0=\frac{1}{
\left(1+\left|\varrho\right|^2\right)^{\frac{1}{2}}}
\cdot\left(v_{x,0}-\overline{\varrho}\cdot v_{x,1}\right)
\label{da9680}\end{equation}
\begin{equation}\label{da9690}v_1=\frac{1}{
\left(1+\left|\varrho\right|^2\right)^{\frac{1}{2}}}
\cdot\left(\varrho\cdot v_{x,0}+v_{x,1}\right)
\end{equation}

Note \begin{equation}\label{da9691}
\left|\frac{v_{x,0}}{v_0}\right|^2+\left|\frac{v_{x,1}}{v_0}
\right|^2=\left|\frac{v_1}{v_0}\right|^2+1\end{equation} on
$\ca{P}_{\sca{C}}^1$. So  for $l=0,1$ and $x'\in U_{x_j}(r_1)$, we
have
\begin{equation}\label{da0060}\left|\frac{v_{x,l}}{v_0}(x')
\right|\leq\left(1+\left|\frac{v_1}{v_0}(x')
\right|^2\right)^{\frac{1}{2}}
\end{equation}

Note for $0\leq l\leq {m}$, we have
\begin{eqnarray}\lefteqn{ f_5\left(
\frac{(2m)!}{(2m-l)!\cdot l!}\cdot
v_{x,0}^{2m-l}v_{x,1}^{l}\right)=}\nonumber\\
&&\sum_{l_1=0}^{l} \frac{m!}{(m-l_1)!\cdot l_1!}\cdot
\frac{m!}{(m-l+l_1)!\cdot
(l-l_1)!}\cdot\nonumber\\
&&{v_{x,0}^{m-l+l_1} v_{x,1}^{l-l_1}}\otimes {v_{x,0}^{m-l_1}
v_{x,1}^{l_1}} \label{da9801}\end{eqnarray} And for ${m}<l\leq
2m$, we have
\begin{eqnarray}\lefteqn{ f_5\left(
\frac{(2m)!}{(2m-l)!\cdot l!}\cdot
v_{x,0}^{2m-l}v_{x,1}^{l}\right)=}\nonumber\\
&&\sum_{l_1=l}^{2m} \frac{m!}{(2m-l_1)!\cdot (l_1-m)!}\cdot
\frac{m!}{(l_1-l)!\cdot (m+l-l_1)!}
\cdot\nonumber\\
&& {v_{x,0}^{2m-l_1} v_{x,1}^{l_1-m}}\otimes{v_{x,0}^{l_1-l}
v_{x,1}^{m+l-l_1}}\label{da3340}\end{eqnarray}

Let $f_{3,x}$ be the linear map from $M_{2m}$ to the vector space
of rational functions on $\ca{P}_{\sca{C}}^1$ defined by $f_5$
followed by the map from $M_m\otimes M_m$ to rational functions on
$\ca{P}_{\sca{C}}^1$ that maps $t_1\otimes t_2$ to
$\frac{t_1}{v_0^m}(x)\cdot \frac{t_2}{v_0^m}$. Since
$v_{x,1}(x)=0$ and
$\frac{v_{x,0}}{v_0}(x)=\left(1+|\varrho|^2\right)^{\frac{1}{2}}$,
so by (\ref{da9801}) (\ref{da3340}), we have
\begin{eqnarray}
f_{3,x}\left( v_{x,0}^{2m-l}v_{x,1}^{l}\right)= \frac{(2m-l)!\cdot
l!}{(2m)!}\cdot\left(1+|\varrho|^2\right)^{\frac{m}{2}}\cdot
\frac{m!}{(m-l)!\cdot l!}\cdot
\frac{v_{x,0}^{m-l}v_{x,1}^l}{v_0^m}\label{da0100}\end{eqnarray}
when $0\leq l\leq m$, and $f_{3,x}\left(
v_{x,0}^{2m-l}v_{x,1}^{l}\right)$ is equal to $0$ when $l>m$. Note
\begin{equation}\label{da0101}
\frac{(2m-l)!\cdot l!}{(2m)!}\cdot\frac{m!}{(m-l)!\cdot l!} \leq
2^{-l}\end{equation} for $0\leq l\leq m$, and
\begin{equation}\label{da0103}
\frac{(2m-l)!\cdot l!}{(2m)!}\cdot\frac{m!}{(m-l)!\cdot l!} \leq
2^{-\frac{m}{2}}\cdot
3^{-l+\frac{m}{2}}=\frac{3^{\frac{m}{2}}}{2^{\frac{m}{2}}}\cdot
3^{-l}\end{equation} for $\left[\frac{m}{2}\right]+1\leq l\leq m$.

Let $r_1$ be a positive number. Assume $x$ is a point in
$U_{x_i}(r_1)$. Then we have
\begin{eqnarray}\lefteqn{
v_0^{{m}}v_1^{{m}}= \left(1+\left|\varrho\right|^2\right)^{-{m}}
\cdot\left(v_{x,0}-\overline{\varrho}\cdot
v_{x,1}\right)^m\cdot\left(\varrho\cdot v_{x,0}+v_{x,1}\right)^m
}\nonumber\\
&=&\left(1+\left|\varrho\right|^2\right)^{-{m}}
\cdot\left(\varrho\cdot
v_{x,0}^2+\left(1-\left|\varrho\right|^2\right)\cdot
v_{x,0}v_{x,1}-\overline{\varrho}\cdot v_{x,1}^2
\right)^m\label{da3300}\end{eqnarray} Let
\begin{equation}\label{da3310}\lambda_1=\left|\frac{v_{x,1}}{
v_{x,0}}\right|
\end{equation} Consider the subset determined by
 $\lambda_1\leq {2}$ over $U_{x_j}(r_1)$, where
$j\neq i$. By (\ref{da0100}) (\ref{da0101}), we have
\begin{eqnarray}\lefteqn{\left|f_{3,x}(v_{x,0}^{2m-l}v_{x,1}^{l}
)\right| \leq 2^{-l}\cdot \left(1+|\varrho|^2\right)^{\frac{m}{2}}
\cdot\left|\frac{v_{x,0}^{m-l}v_{x,1}^l}{v_0^m}\right|}\nonumber\\
&\leq& 2^{-l}\cdot \left(1+|\varrho|^2\right)^{\frac{m}{2}}
\cdot\left|\frac{v_{x,0}^{m}}{v_0^m}\right|\cdot\lambda_1^l
\hspace{1in}\label{da3320}\end{eqnarray} By (\ref{da9691}), we
have
\begin{equation}\label{da3330} \left|\frac{v_{x,0}}{v_0}\right|^2
\cdot \left(1+\lambda_1^2\right)=\left|\frac{v_1}{v_0}\right|^2+1
\end{equation}
By $\lambda_1\leq {2}$, we have
\begin{equation}\label{da3350}
\left(1+\frac{\lambda_1^2}{4}\right)^2<1+\lambda_1^2
\end{equation} By $j\neq i$, we have $\lambda_1>0$.
So when $r_1>0$ is small enough, we have
\begin{eqnarray}\lefteqn{\left|\frac{v_{x,0}^m}{v_0^m}\right|
\cdot
\left(\left|\varrho\right|+\frac{\left|1-\left|\varrho\right|^2
\right|\cdot\lambda_1}{2}+\frac{\left|\varrho\right|
\cdot\lambda^2_1}{4}\right)^m}\nonumber\\
&<&\left|\varrho\right|^m \cdot
\left(1+\lambda_1^2\right)^\frac{m}{2} \cdot
\left|\frac{v_{x,0}^m}{v_0^m}\right|\cdot \rho_0^m
\nonumber\\
&=&\left|\varrho\right|^m \cdot
\left(\left|\frac{v_1}{v_0}\right|^2+1\right)^{\frac{m }{2}}\cdot
\rho_0^m\label{da3360}\end{eqnarray} where $\rho_0\in (0,1)$. So
when we take $r_1$ small enough, such that
\begin{equation}\label{da3370}\left(1+(1-r_1)^2\right)^{-1}
\cdot \left(1+(1+r_1)^2\right)\cdot\rho_0^2<1\end{equation} By
(\ref{da3300}) (\ref{da3320}) (\ref{da3360}) (\ref{da3370}), we
have
\begin{eqnarray}\lefteqn{\left|f_{3,x}(v_0^{{m}}v_1^{{m}})\right|
\leq \left(1+\left|\varrho\right|^2\right)^{-\frac{m}{2}}
\cdot\left|\frac{v_{x,0}^m}{v_0^m}\right| }\nonumber\\
&&\cdot
\left(\left|\varrho\right|+\frac{\left|1-\left|\varrho\right|^2
\right|\cdot\lambda_1}{2}+\frac{\left|\varrho\right|
\cdot\lambda_1^2}{4}\right)^m\nonumber\\
&<&\left(1+\left|\varrho\right|^2\right)^{-\frac{m}{2}}
\cdot\left|\varrho\right|^m \cdot
\left(\left|\frac{v_1}{v_0}\right|^2+1\right)^{\frac{m }{2}}
\cdot \rho_0^m\nonumber\\
&<&\rho_6^m \cdot\left|\varrho\right|^{{m}}
\label{da0110}\end{eqnarray} over $U_{x_j}(r_1)$, where $j\neq i$
and $\rho_6\in (0,1)$.

Now consider the subset determined by
 $\lambda_1\geq {2}$ over $U_{x_j}(r_1)$, where
$j\neq i$. By (\ref{da9691}), we have
\begin{equation}\label{da3450}\left|
\frac{v_{x,1}}{v_0}\right|^2\cdot \left(1+\lambda_1^{-2}\right)
=1+\left|\frac{v_1}{v_0}\right|^2\end{equation} So we have
\begin{equation}\label{da3460}
\left| \frac{v_{x,1}}{v_0}\right|^2\geq \frac{4}{5} \cdot
\left(1+\left|\frac{v_1}{v_0}\right|^2\right)\end{equation}
Therefore over $U_{x_j}(r_1)$, by (\ref{da9691}), we have
\begin{equation}\label{da3470}
\left|\frac{v_{x,0}}{v_0}\right|^2\leq \frac{1}{5} \cdot
\left(1+\left|\frac{v_1}{v_0}\right|^2\right) < \frac{1}{5} \cdot
\left(1+(1+r_1)^2\right)\end{equation}

Assume \begin{equation}\label{da3490} v_0^mv_1^m =\sum_{i=0}^{2m}
b_{m,i,x}\cdot v_{x,0}^{2m-i}v_{x,1}^i\end{equation} where
$b_{m,i,x}\in\ca{C}$. By (\ref{da0100}) (\ref{da0103})
(\ref{da3470}) (\ref{da0060}), over $U_{x_j}(r_1)$ for
$\left[\frac{m}{2}\right]+1\leq l\leq 2m$, we have
\begin{eqnarray}\lefteqn{
\left|f_{3,x}\left( v_{x,0}^{2m-l}v_{x,1}^{l}\right)\right| <
}\nonumber\\
&&\frac{3^\frac{m}{2}}{2^\frac{m}{2}}\cdot 3^{-l}\cdot
(1+(1+r_1)^2)^{\frac{m}{2}} \cdot\frac{1}{5^\frac{m-l}{2}}\cdot
(1+(1+r_1)^2)^{\frac{m}
{2}}\nonumber\\
&=&\frac{3^\frac{m}{2}}{10^\frac{m}{2}}\cdot
(1+(1+r_1)^2)^{{m}}\cdot
\left(\frac{5}{9}\right)^\frac{l}{2}\label{da3480}\end{eqnarray}
By (\ref{da3490}) (\ref{da3480}) (\ref{da3300}), we have
\begin{eqnarray}\lefteqn{
\sum_{i=\left[\frac{m}{2}\right]+1}^{2m} \left|b_{m,i,x}\cdot
f_{3,x}\left(v_{x,0}^{2m-i}v_{x,1}^i
\right)\right| <}\nonumber\\
&&\frac{\left(1+(1+r_1)^2\right)^{{m}}
}{\left(1+\left|\varrho\right|^2\right)^{{m}}} \cdot
\frac{3^\frac{m}{2}}{10^\frac{m}{2}}\cdot
\left(|\varrho|+\frac{5^{\frac{1}{2}}\cdot
\left|1-|\varrho|^2\right|}{3}+ \frac{5\cdot
|\varrho|}{9}\right)^m \label{da3440}\end{eqnarray} When
$r_1\longrightarrow 0^+$, we have $|\varrho|\longrightarrow 1$. So
(\ref{da3440}) implies
\begin{equation}\label{da3500}
\sum_{i=\left[\frac{m}{2}\right]+1}^{2m}\left|b_{m,i,x}\cdot
f_{3,x}\left(v_{x,0}^{2m-i}v_{x,1}^i \right)\right|
<|\varrho|^m\cdot\rho_4^m
\end{equation}
where $\rho_4\in (0,1)$, when $r_1>0$ is small enough. By
(\ref{da3470}) (\ref{da0060}) (\ref{da0100}) (\ref{da0101}), over
$U_{x_j}(r_1)$ for $0\leq l\leq \left[\frac{m}{2}\right]$, we have
\begin{equation}\label{da3510}
\left|f_{3,x}\left( v_{x,0}^{2m-l}v_{x,1}^{l}\right)\right| \leq
\frac{1}{5^\frac{m}{4}}\cdot 2^{-l}\cdot (1+(1+r_1)^2)^{{m}}
\end{equation} By (\ref{da3510}) (\ref{da3300}), we have
\begin{eqnarray}\lefteqn{\sum_{i=0}^{\left[\frac{m}{2}\right]}
\left|b_{m,i,x}\cdot f_{4,x}\left(v_{x,0}^{2m-i}v_{x,1}^i
\right)\right| <}\nonumber\\
&&\frac{\left(1+(1+r_1)^2\right)^{{m}}
}{\left(1+\left|\varrho\right|^2\right)^{{m}}} \cdot
\frac{1}{5^\frac{m}{4}}\cdot \left(|\varrho|+
\frac{\left|1-|\varrho|^2\right|}{2}+ \frac{|\varrho|}{4}\right)^m
\label{da3520}\end{eqnarray} When $r_1\longrightarrow 0^+$, we
have $|\varrho|\longrightarrow 1$. So (\ref{da3520}) implies
\begin{equation}\label{da3530}
\sum_{i=0}^{\left[\frac{m}{2}\right]}\left|b_{m,i,x}\cdot
f_{3,x}\left(v_{x,0}^{2m-i}v_{x,1}^i \right)\right|
<|\varrho|^m\cdot\rho_5^m
\end{equation}
where $\rho_5\in (0,1)$, when $r_1>0$ is small enough. Then
(\ref{da0110}) (\ref{da3500}) (\ref{da3530}) imply (\trm{1}) is
true.

Over $U_{x_i}(r_1)$, by (\ref{da3300}), we have
\begin{equation}
f_{3,x}(v_0^{{m}}v_1^{{m}})=\frac{\varrho^m\cdot
f_{3,x}(v_{x,0}^{2m})}{ \left(1+\left|\varrho\right|^2\right)^{m}}
+\sum_{i=1}^{2m}b_{i,2}'\cdot f_{3,x}(v_{x,0}^{2m-i}v_{x,1}^i)=
{\varrho^{{m}}}\cdot \left(1 +u
\right)\label{da0120}\end{equation} where $b_{i,2}'\in\ca{C}$ and
$u$ is an analytic function on $U_{x_i}(r_1)$. By (\ref{da3300})
and (\ref{da0120}), we can see  $u$ satisfies
$\left|u(x')\right|\leq 2m\cdot
\left|\frac{v_{1}}{v_0}(x')-\frac{v_1}{v_0}(x)\right|$ for all
$x',x\in U_{x_i}({r_1})$ satisfying $
\left|\frac{v_{1}}{v_0}(x')-\frac{v_1}{v_0}(x)\right|<\frac{1}{3m}
$ when $r_1$ is small enough.  So (\trm{2}) is true.
\end{proof}

\begin{theorem}\label{da9000} Let $\{x_i:i\in I\},
U_{x_i}(r),r_1,\rho_1,b_{m,l},v_0,v_1$ be the elements in {\bf
Lemma \ref{da3098}}. Let $r_0>0$ be a constant. Let $z$ be an
analytic function on $\bigcup_{i\in I}U_{x_i}(r_1)$ in
$\ca{P}_{\sca{C}}^1$ that satisfies the following:
\begin{enumerate}\setcounter{rom}{0}
\item $dz(x)\neq 0$ for all $x\in \bigcup_{i\in I} U_{x_i}(r_1)$
satisfying $\left|z(x)\right|<r_0$. \item For all $i\in I$ and
$\alpha\in\ca{C}$ satisfying $|\alpha|<r_0$, there exists a unique
point $x\in U_{x_i}(r_1)$, such that $z(x)=\alpha$.
\end{enumerate}

Let $\{u_i',e_i':i\in I_1\}$ be a set of analytic functions on $
\bigcup_{i\in I}U_{x_i}(r_1)$ satisfying
\begin{equation}\label{da9010}\sum_{i\in I_1}u_i'\cdot e_i'=0
\end{equation} Let \begin{equation}\label{da9020}
\omega'=\sum_{i\in I_1}u_i'\cdot de_i'\end{equation} Let
$U(r_0)\subset \bigcup_{i\in I}U_{x_i}(r_1)$ be the subset
determined by $ |z|<r_0$. Assume there exists $B'>1$, such that
$B'\cdot\left|\frac{\omega'}{dz}\right|$ is greater than
$\left|u_i'\right|$, $\left|e_i'\right|$, $\left|\frac{\partial }{
\partial z}\frac{v_1}{v_0}\right|$ and
$\left|\frac{\partial e_i'}{
\partial z}\right|$ for all $i\in I_1$ on $U(r_0)$.
Let $\varphi_2$ be the map from $U(r_0)$ to the complex plane
defined by $z$. Let $U_0$ be the image of $U(r_0)$ in the complex
plane. For $x\in U(r_0)$, let $G_x$ be the analytic function on
$U_0$ defined by
\begin{equation}\label{da9040}
G_x=\sum_{j\in I_1}\sum_{l=0}^{m}b_{m,l}\cdot
\frac{v_1^{-l}}{v_0^{-l}}(x)\cdot u_j'(x)\cdot \frac{z\cdot
\partial}{\partial z}{\rm Trace}_{\varphi_2}\left(\frac{
v_1^{l}}{v_0^{l}}\cdot e_j'\right)\end{equation} Let $\rho_2$ be a
number in $(0,\rho_1)$. Then there exists $a_5,a_6>0$ determined
by $\rho_2,n_1,B',\omega'$, such that
\begin{equation}\label{da9060} \left|G_x(x')-
\frac{\omega'}{dz}(x)\cdot {z(x')}\right|\leq \left(a_5m\cdot
\left|z(x')\right|^2 +n_1\cdot\rho_2^m\cdot
\left|z(x')\right|\right)\cdot\left|\frac{\omega'}{dz}(x)\right|
\end{equation}
for all $m>a_6$ and all $x'\in U_0$ satisfying
$\left|z(x')-z(x)\right|<\frac{1}{3m}$.
\end{theorem}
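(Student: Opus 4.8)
The plan is to reduce the estimate to the two properties of $f_{4,x}\left(v_0,v_1,m\right)$ given by {\bf Lemma \ref{da3098}}, to the relation $\sum_{i\in I_1}u_i'\cdot e_i'=0$, and to Cauchy's estimate. First I unwind the trace. Since $dz$ is nonvanishing on $U(r_0)$ and each $\alpha\in U_0$ has exactly one $z$--preimage in each $U_{x_i}(r_1)$, the map $z$ restricts to a biholomorphism $U_{x_i}(r_1)\cap\{|z|<r_0\}\to U_0$ for every $i$; writing $y_i(\alpha)$ for the corresponding analytic section, $\varphi_2$ restricted to $U(r_0)$ is an $n_1$--sheeted unramified cover of $U_0$ with fibre $\{y_i(\alpha):i\in I\}$, so ${\rm Trace}_{\varphi_2}(g)(\alpha)=\sum_{i\in I}g(y_i(\alpha))$ for analytic $g$ on $U(r_0)$. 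Putting $g=\frac{v_1^l}{v_0^l}e_j'$ in (\ref{da9040}), carrying the $\alpha$--independent factors $\frac{v_1^{-l}}{v_0^{-l}}(x)$ and $u_j'(x)$ through $\frac{z\cdot\partial}{\partial z}=\alpha\frac{d}{d\alpha}$, interchanging the sums over $l$ and over $i$, and recognising $\sum_{l=0}^m b_{m,l}\frac{v_1^{-l}}{v_0^{-l}}(x)\frac{v_1^l}{v_0^l}(y_i(\alpha))=f_{4,x}\left(v_0,v_1,m\right)(y_i(\alpha))$ by (\ref{da7021}), one obtains
\begin{equation}\label{da9071}
G_x(\alpha)=\alpha\cdot\frac{d}{d\alpha}\left[\sum_{i\in I}f_{4,x}\left(v_0,v_1,m\right)(y_i(\alpha))\cdot h_i(\alpha)\right],\qquad h_i(\alpha)=\sum_{j\in I_1}u_j'(x)e_j'(y_i(\alpha)).
\end{equation}

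Let $i_0$ be the index with $x\in U_{x_{i_0}}(r_1)$, so $x=y_{i_0}(z(x))$, and split the sum in (\ref{da9071}) into the term $i=i_0$ and the terms $i\neq i_0$. For $i\neq i_0$, {\bf Lemma \ref{da3098}}(\trm{1}) gives $\left|f_{4,x}\left(v_0,v_1,m\right)(y_i(\alpha))\right|<\rho_1^m$ on all of $U_0$, while the hypotheses on $B'$ give $|h_i|\leq C\left|\frac{\omega'}{dz}(x)\right|$ on $U_0$ with $C$ depending only on $|I_1|,B',\omega'$ (here $|u_j'(x)|\leq B'\left|\frac{\omega'}{dz}(x)\right|$ is used at the single point $x$). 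Since $f_{4,x}\left(v_0,v_1,m\right)(y_i(\cdot))$ and $h_i$ are analytic on a fixed neighbourhood of $\overline{U_0}$ inside $U_{x_i}(r_1)$, Cauchy's estimate bounds their $\alpha$--derivatives on $U_0$ by constant multiples of $\rho_1^m$ and of $\left|\frac{\omega'}{dz}(x)\right|$ respectively. Expanding the derivative of the product, summing over the $n_1-1$ indices $i\neq i_0$, and multiplying by $|\alpha|=|z(x')|$, these terms contribute to $\left|G_x(x')-\frac{\omega'}{dz}(x)z(x')\right|$ at most $n_1\rho_2^m|z(x')|\cdot\left|\frac{\omega'}{dz}(x)\right|$ once $m$ exceeds a constant $a_6$ determined by $\rho_2,n_1,B',\omega'$.

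The term $i=i_0$ is the crux. Put $\alpha_0=z(x)$ and $P(\alpha)=\frac{v_1}{v_0}(y_{i_0}(\alpha))$. By {\bf Lemma \ref{da3098}}(\trm{2}), $f_{4,x}\left(v_0,v_1,m\right)(y_{i_0}(\alpha))=1+u(\alpha)$ with $u$ analytic near $\alpha_0$, $u(\alpha_0)=0$, and $|u(\alpha)|\leq 2m\,|P(\alpha)-P(\alpha_0)|$ wherever the right side is $<\frac{1}{3m}$; since $|P'|=\left|\frac{\partial(v_1/v_0)}{\partial z}\right|\leq B'\left|\frac{\omega'}{dz}\right|$, this includes a disc of radius of order $(mB')^{-1}$ about $\alpha_0$, and Cauchy's estimate there gives $|u'|\leq C_1 m\left|\frac{\omega'}{dz}(x)\right|$ on $|\alpha-\alpha_0|<\frac{1}{3m}$, with $C_1$ determined by $\omega',B',r_1$. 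Now the relation $\sum_{j\in I_1}u_j'e_j'=0$ enters decisively: it forces $h_{i_0}(\alpha_0)=\sum_{j\in I_1}u_j'(x)e_j'(x)=0$, while by (\ref{da9020}) $h_{i_0}'(\alpha_0)=\sum_{j\in I_1}u_j'(x)\frac{\partial e_j'}{\partial z}(x)=\frac{\omega'}{dz}(x)$. Hence the $i=i_0$ term of (\ref{da9071}) is
\[
\alpha\cdot\frac{d}{d\alpha}\big[(1+u(\alpha))h_{i_0}(\alpha)\big]=\alpha\,\frac{\omega'}{dz}(x)+\alpha\big(h_{i_0}'(\alpha)-h_{i_0}'(\alpha_0)\big)+\alpha\,\frac{d}{d\alpha}\big[u(\alpha)h_{i_0}(\alpha)\big],
\]
whose first summand is the asserted main term $\frac{\omega'}{dz}(x)\cdot z(x')$. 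In the second summand $h_{i_0}'$ is Lipschitz on $|\alpha-\alpha_0|<\frac{1}{3m}$ with a constant controlled by $B'$, $\omega'$ and $\left|\frac{\omega'}{dz}(x)\right|$ (Cauchy again), so it is $O\!\left(|z(x')|\,|z(x')-z(x)|\right)\left|\frac{\omega'}{dz}(x)\right|$; in the third, $u\cdot h_{i_0}$ has a zero of order $\geq 2$ at $\alpha_0$, so the bounds $|u'|\leq C_1 m\left|\frac{\omega'}{dz}(x)\right|$ and $|h_{i_0}(\alpha)|\leq C_2\left|\frac{\omega'}{dz}(x)\right||\alpha-\alpha_0|$ make it $O\!\left(m\,|z(x')|\,|z(x')-z(x)|\right)\left|\frac{\omega'}{dz}(x)\right|$. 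Using $|z(x')-z(x)|<\frac{1}{3m}$, adding the off-diagonal bound of the previous paragraph, and absorbing the constants into $a_5$ and $a_6$ yields (\ref{da9060}).

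I expect the main obstacle to be the management of the Cauchy estimates: one must pass from the pointwise hypotheses on $B'$ — controlling $|u_j'|$, $|e_j'|$, $\left|\partial e_j'/\partial z\right|$, $\left|\partial(v_1/v_0)/\partial z\right|$ by $B'\left|\omega'/dz\right|$ on $U(r_0)$ — to bounds on $u'$, $h_i'$, $h_{i_0}''$ and the $\alpha$--derivative of $f_{4,x}\left(v_0,v_1,m\right)(y_i(\cdot))$ on a slightly shrunken disc, keeping every constant controlled by $r_0,r_1,B',\omega'$, and then verify that the accumulated error genuinely has the form $a_5 m|z(x')|^2+n_1\rho_2^m|z(x')|$, which is where the restriction $m>a_6$ and the passage from $\rho_1$ to $\rho_2$ are used. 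The two structural ingredients are the vanishing $h_{i_0}(z(x))=\sum_{j\in I_1}u_j'(x)e_j'(x)=0$, which removes the term of order $|z(x')|$ from the diagonal part and is precisely the use of (\ref{da9010}), and {\bf Lemma \ref{da3098}}(\trm{1}), which rests on the fact recalled in (\ref{da8800})--(\ref{da8810}), false over function fields of positive characteristic, and makes the off-diagonal part exponentially small in $m$.
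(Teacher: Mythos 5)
Your proposal is essentially the paper's own proof: you factor the traced integrand as $f_{4,x}\cdot\sum_{j}u_j'(x)e_j'$, split the trace into the sheet containing $x$ and the remaining sheets, use {\bf Lemma \ref{da3098}} (i) to make the off-diagonal sheets exponentially small of size $\rho_2^m$ for $m$ large, and use {\bf Lemma \ref{da3098}} (ii) together with the facts that $\sum_j u_j'(x)e_j'$ vanishes at $x$ (from (\ref{da9010})) and has $z$-derivative $\frac{\omega'}{dz}(x)$ there (from (\ref{da9020})) to extract the main term $\frac{\omega'}{dz}(x)\cdot z(x')$. The only real difference is presentational: the paper reads off this first-order jet through an auxiliary Hermitian metric and an adapted basis $\{e_{x,i}\}$ of the span of the $e_i'$ (its statements $u_{x,0}(x)=0$ and $u_{x,1}(x)=\frac{\omega'}{dz}(x)$), which you obtain by direct evaluation, while you make explicit the Cauchy-type derivative bounds that the paper leaves implicit.
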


\begin{proof} Let $V'$ be the vector space
generated by $e_i'$ over $\ca{C}$, where $i\in I_1$. Let $h'$ be
the Hermitian metric on $V'$, defined by
\begin{equation}\label{da9061}
\left<e_i',e_j'\right>_{h'}=\frac{\sqrt{-1}}{2}\cdot
\int_{U(r_0)}e_i'\cdot\overline{e_j'}\cdot dz\wedge
d\overline{z}\end{equation} For $x\in U(r_0)$, let
$\{e_{x,0},e_{x,1},\cdots,e_{x,n_3} \}$ be a set of basis of $V'$
over $\ca{C}$, that satisfies the following:
\begin{enumerate}\item $e_{x,0}$ is orthogonal to the subspace
of $V'$ generated by the elements that vanish at point $x$ under
 the Hermitian metric $h'$.
\item ${e_{x,1}}(x)=0$, and $d{e_{x,1}}-dz$ vanishes at point $x$,
and $e_{x,1}$ is orthogonal to the subspace of $V'$ generated by
the elements that vanish at point $x$ with order $\geq 2$ under
 the Hermitian metric $h'$.
\item $e_{x,i}$ vanishes at point $x$ with order $\geq 2$ for all
$2\leq i\leq n_3$.
\end{enumerate}
Let $\{u_{x,0},\cdots,u_{x,n_3}\}$ be the set of analytic
functions on $U(r_0)$, such that
\begin{equation}\label{da8200}\sum_{i\in I_1}u_i'\otimes e_i'
=\sum_{i=0}^{n_3}u_{x,i}\otimes e_{x,i}
\end{equation}

Let \begin{equation} \label{da8201} G_{1,x}= \sum_{j\in
I_1}\sum_{l=0}^{m}b_{m,l}\cdot \frac{v_1^{-l}}{v_0^{-l}}(x)\cdot
u_j'(x)\cdot\left(\frac{ v_1^{l}}{v_0^{l}}\cdot
e_j'\right)\end{equation} Since
\begin{equation}\label{da8210}\sum_{i=0}^{n_3}u_{x,i}\cdot
e_{x,i}=0
\end{equation} and $e_{x,i}(x)=0$ for $i>0$, so
\begin{equation}\label{da8220} u_{x,0}(x)=0
\end{equation} By (\ref{da8220}), we have
\begin{eqnarray}\lefteqn{
G_{1,x}= \sum_{i=1}^{n_3}\sum_{l=0}^{m}b_{m,l}\cdot
\frac{v_1^{-l}}{v_0^{-l}}(x)\cdot {u_{x,i}}(x) \cdot
\frac{v_1^{l}}{v_0^{l}} \cdot {e_{x,i}}
}\nonumber\\
&=& f_{4,x}\left({v_0},v_1,m\right)\cdot \sum_{i=1}^{n_3}
{u_{x,i}}(x)\cdot {e_{x,i}}
\hspace{.7in}\label{da8132}\end{eqnarray}

Since
\begin{equation}\label{da8230} \sum_{i=0}^{n_3}
{u_{x,i}}\cdot d{e_{x,i}}=\omega'\end{equation} and $e_{x,i}$
vanishes at $x$ with order $\geq 2$ for $i>1$, so we have
\begin{equation}\label{da8250}{u_{x,1}}
(x)=\frac{\omega'}{dz}(x)
\end{equation}

Since $B'\cdot\left|\frac{\omega'}{dz}\right|$ is greater than
$|u_i'|,|e_i'|,\left|\frac{\partial}{\partial z}\frac{v_1}{v_0}
\right|, \left|\frac{\partial}{\partial z}e_i'\right|$ on
$U(r_0)$, so by {\bf Lemma~\ref{da3098}}, we have
\begin{equation}\label{da8251}
\left|\frac{z\cdot\partial}{\partial z}G_{1,x}\right|< \rho_2^m
\cdot\left|\frac{\omega'}{dz}(x)\right|\cdot \left|z\right|
\end{equation} on $U(r_0)\cap U_{x_j}(r_1)$
when $x$ is a point in $U(r_0)\smallsetminus U_{x_j}(r_1)$, where
$\rho_2\in (0,\rho_1)$ and $m$ is greater than a number determined
by $n_1,\omega',\rho_2,B'$. By {\bf Lemma~\ref{da3098}}, and
${e_{x,i}}$ vanishes at $x$ with order $\geq 2$ for $i>1$, and
${e_{x,1}}$ vanishes at $x$, and
\begin{equation}\label{da8252}\frac{\partial
{e_{x,1}}}{\partial z} (x)=1
\end{equation} we have
\begin{equation}\label{da8254}
\left|\frac{z\cdot\partial}{\partial z}G_{1,x}(x')- {z(x')}\cdot
\frac{\omega'}{dz}(x)\right|\leq a_5'm\cdot
\left|z(x')\right|^2\cdot\left|\frac{\omega'}{dz}(x)\right|
\end{equation}
when $x,x'\in U(r_0)\cap U_{x_j}(r_1)$ satisfying
$\left|z(x)-z(x')\right| <\frac{1}{3m}$, where $a_5'$ is a
positive number determined by $n_1,\omega',B',\rho_2$. Therefore
we have
\begin{equation}\label{da8253}
\left|G_x(x')- {\frac{\omega'}{dz}(x)\cdot z(x')}\right|<
\left(a_5m\cdot \left|z(x')\right|^2+n_1\cdot\rho_2^m\cdot
\left|z(x')\right| \right)\cdot\left|\frac{\omega'}{dz}(x)\right|
\end{equation} for all $x'\in U_0$ satisfying $\left|z(x)-z(x')\right|
<\frac{1}{3m}$, where
 $a_5,a_6>0$ are determined by $n_1,\omega',\rho_2,B'$. So our
Lemma is true.
\end{proof}

\hspace{1in}

Now we want to prove the function  $G_2$ on arithmetic surface $X$
constructed in the introduction have the properties that imply
{\bf Theorem \ref{da0810}}.

\begin{lemma}\label{da0500} For complex embedding
$\sigma:F\hookrightarrow\ca{C}$, let $\Gamma_\sigma$ be the path
determined by $\left|\frac{\xi_1}{\eta^{n_1}}\right|=1$ on
$X_\sigma$. Then there exists an element in $R$ that is equal to
\begin{eqnarray}\frac{(2m)!}{m!\cdot m!}\cdot
\frac{1}{2\pi\sqrt{-1}}\cdot \int_{\Gamma_{\sigma}} \omega\cdot
G_2 \label{da0501}\end{eqnarray} over complex embedding $\sigma$
for all $\sigma\in S$.
\end{lemma}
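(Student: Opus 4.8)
The proof is the argument already sketched in the introduction between (\ref{da0510}) and (\ref{da0560}); the plan is to lay out the three points on which it rests and carry them out in order.

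First I would reduce to a single value of $l$. Expanding $G_2$ by its definition (\ref{da8130}), the quantity (\ref{da0501}) becomes the finite sum $\frac{(2m)!}{m!\,m!}\sum_{l=0}^{m}b_{m,l}\,J_{l,\sigma}$, where $J_{l,\sigma}=\frac{1}{2\pi\sqrt{-1}}\int_{\Gamma_\sigma}\omega\cdot\sum_{i\in I_1}\tau^{-l}\frac{u_i}{\eta^{m_2}}\,\varphi_2^*\psi^*\big(\frac{z\,\partial}{\partial z}{\rm Trace}_{\varphi_2}(\tau^{l}\frac{e_i}{\eta^{m_3}})\big)$; because $\tau^{-l}\cdot\tau^{l}=1$ the scalar factors $\|\xi_2\|^{\pm l}$ built into $\tau^{\pm l}=\xi_2^{\pm l}/(\|\xi_2\|^{\pm l}\eta^{\pm n_2 l})$ cancel between the inner and outer occurrences, so $J_{l,\sigma}$ is exactly the expression on the left side of (\ref{da0560}). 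Since $\frac{(2m)!}{m!\,m!}b_{m,l}\in\ca{Z}$ for all $0\le l\le m$ — which follows from the explicit form of the comultiplication $f_5$ in (\ref{da8910}), as the rational coefficients of $f_5(v_0^m v_1^m)$ have denominator dividing $\frac{(2m)!}{m!\,m!}$ — it suffices to prove that each $J_{l,\sigma}$ is, under $\sigma$, the image of an element of $R$.

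Next I would identify $J_{l,\sigma}$ with a relative residue along $D_1$. The contour $\Gamma_\sigma=\{|\xi_1/\eta^{n_1}|=1\}$ bounds inside $X_\sigma$ a neighbourhood of the zero divisor $D_1$ of $\xi_1$ on which the integrand is meromorphic with polar locus contained in $D_1$: the factors $\omega$, $u_i/\eta^{m_2}$ and $\tau^{-l}$ are regular away from $E_P$, and $E_P$ lies in $\{|\xi_1/\eta^{n_1}|>1\}$ because $C_1=0$ forces $\xi_1$ not to vanish on $E_P$; while, by the explicit rational expression (\ref{da0530}) for $\varphi_2^*\psi^*$ of the trace, the remaining poles of $\varphi_2^*\psi^*(\cdots)$ lie along the $\varphi_2$-preimage of the divisor of $\sum_{j=0}^{n_2}c_j\xi_1^{n_2-j}/\eta^{(n_2-j)n_1}$, which is disjoint from $D_1$ since $c_{n_2}\in R^\times$. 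Applying the residue theorem fibrewise over $Y$ then gives $J_{l,\sigma}={\rm Res}_{D_1/Y}(\cdots)$, which is (\ref{da0560}). I expect this to be the main obstacle: one must verify that \emph{every} pole of the integrand other than $D_1$ lies strictly outside $\Gamma_\sigma$ — this is precisely where the hypotheses $C_1=C_2=0$ and the finiteness of the morphism $\varphi_2$ of (\ref{da0300}) (of degree $n_1$ on fibres, so that ${\rm Trace}_{\varphi_2}$ and the relative residue are well defined) are used — and then legitimately replace the transcendental contour integral by the algebraically defined ${\rm Res}_{D_1/Y}$.

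Finally I would check that this residue lies in $R$. Since $C_2=0$ the divisor $D_2$ is disjoint from $E_P$, so $D_3=\varphi_2(D_2)$ is disjoint from $\varphi_2(E_P)$; this produces the polynomial $q$ of (\ref{da0510}) with $c_j\in R$ and $c_{n_2}\in R^\times$, together with the trace identity (\ref{da0520}). The operator $z\,\partial/\partial z$ preserves this shape (\ref{da0521}), and $\psi^*$ of (\ref{da0310}) followed by $\varphi_2^*$ yields (\ref{da0530}): a ratio of Laurent polynomials in $\xi_1/\eta^{n_1}$ with coefficients in $R$ whose denominator $\big(\sum_j c_j\xi_1^{n_2-j}/\eta^{(n_2-j)n_1}\big)^{l+1}$ is a unit in a neighbourhood of $D_1$ (again because $c_{n_2}$ is a unit), the only pole of the integrand meeting the region bounded by $\Gamma_\sigma$ being the simple pole along $D_1$ contributed by the logarithmic derivative in $z\,\partial/\partial z$. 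Because $D_1$ is finite over $Y$ and, by $C_1=0$, disjoint from $E_P$, the fibrewise residue of $\omega\cdot(\cdots)$ along $D_1$ traced down to $Y$ is a global section of ${\cal O}_Y$, i.e.\ an algebraically defined element of $R$ whose image under each $\sigma\in S$ is $J_{l,\sigma}$; this is (\ref{da0540}). Summing over the finitely many $i\in I_1$ and $0\le l\le m$ and multiplying by the integers $\frac{(2m)!}{m!\,m!}b_{m,l}$ then exhibits (\ref{da0501}) as the image under $\sigma$ of a single element of $R$, uniformly in $\sigma$, which completes the proof.
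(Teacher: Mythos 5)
Your proposal follows exactly the route the paper itself takes: the paper's ``proof'' of Lemma~\ref{da0500} is just a pointer back to the introduction, i.e.\ to the construction of $q$ in (\ref{da0510}), the trace identities (\ref{da0520})--(\ref{da0530}), the integrality of ${\rm Res}_{D_1/Y}$ in (\ref{da0540}), the identity (\ref{da0560}), and the fact that $\frac{(2m)!}{m!\,m!}\cdot b_{m,l}$ is an integer -- all of which you reproduce, together with two correct and worthwhile explicit remarks (the cancellation of the $\|\xi_2\|^{\pm l}$ factors between $\tau^{-l}$ and $\tau^{l}$, and the reduction to a single $l$).

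The one place where you add something beyond the paper is your justification of the residue-theorem step, and that justification does not hold as stated. You claim that every pole of the integrand other than $D_1$ lies strictly outside $\Gamma_\sigma$, deducing this from the fact that the divisor of $\sum_{j=0}^{n_2}c_j\,\xi_1^{n_2-j}/\eta^{(n_2-j)n_1}$ is disjoint from $D_1$; but ``disjoint from $D_1$'' is not ``outside the contour.'' Indeed, conditions (ii)--(iii) force $D_2$ into the region $\left|\xi_1/\eta^{n_1}\right|\geq 2$ (at a zero of $\xi_2$ one has $\tau=\infty$, which is incompatible with (\ref{da0610}) when $\left|\xi_1/\eta^{n_1}\right|<2$, and outside the $U_{j,\sigma}$ one has $\left|\xi_1/\eta^{n_1}\right|\geq a_1\|\xi_1\|>8$). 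Hence the points of $D_3$ have $|t|\geq 2$ in the coordinate $t=\xi_1/\eta^{n_1}$, so the corresponding zeros of the denominator in (\ref{da0530}) satisfy $|t|\leq \frac{1}{2}$: they lie \emph{inside} $\Gamma_\sigma$, not outside (and any further zeros of $q$ beyond $D_3$ are not located at all). So the region bounded by $\Gamma_\sigma$ contains, besides $D_1$, poles along $\varphi_2^{-1}(\psi^{-1}(D_3))$, and your residue-theorem argument only yields that the contour integral equals ${\rm Res}_{D_1/Y}(\cdots)$ plus the residues at those extra points. The paper never makes your claim: it uses disjointness from $D_1$ only to define ${\rm Res}_{D_1/Y}$ in (\ref{da0540}) and to get its integrality, and it asserts (\ref{da0560}) with no justification at all. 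As a reconstruction of the paper's argument your write-up is therefore faithful, but the step you yourself flag as the main obstacle is genuinely open: to complete the proof along these lines you must either show that the residues at $\varphi_2^{-1}(\psi^{-1}(D_3))$ vanish or sum to an admissible element, or establish (\ref{da0560}) by a different argument.
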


\begin{proof} This Lemma is proved in the introduction.
\end{proof}

\begin{lemma}\label{da0700}
 Assume
\begin{equation}\label{da0711}
\frac{\omega}{dz}(x_{j,\sigma})=\beta_{j,\sigma}\end{equation} for
$j\in I_{2,\sigma}$, where $\beta_{j,\sigma}\in\ca{C}$. Assume
$dz$ is not equal to zero at any point in the open set $U(r_0)$,
where $U(r_0)$ is the subset of $X_\sigma$ determined by
$\left|z\right|<r_0$, and $r_0$ is a positive number. Assume
$B\cdot\left|\frac{\omega}{dz}\right|$ is greater than
$\left|u_j\right|$, $\left|e_j\right|$, $
\left|\frac{\partial}{\partial z}e_j\right|$ and
$\left|\frac{\partial\tau}{\partial z}\right|$ on $U(r_0)$ for all
$j\in I_1$, where $B>1$.

Then there exists $\rho_2\in (0,\rho_1)$, and $a_{10},a_{11}>1$
determined by $\rho_2,B,n_1,X_{\sca{C}}$, such that
\begin{equation}\label{da0701} \left|\frac{1}{2\pi\sqrt{-1}}\cdot
\int_{\Gamma_{\sigma}} \omega\cdot G_2-\sum_{j\in
I_{2,\sigma}}\frac{\beta_{j,\sigma}^2}{
\|\xi_1\|^2}\right|<\left(a_{10}m\cdot\|\xi_1\|^{-3}+
\rho_2^m\cdot\|\xi_1\|^{-2}\right)\cdot \|\omega\|^2\end{equation}
for all $m>a_{11}$, when $\|\xi_1\|>a_{10}m$.
\end{lemma}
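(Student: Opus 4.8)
The plan is to localize the contour integral $\frac{1}{2\pi\sqrt{-1}}\int_{\Gamma_\sigma}\omega\cdot G_2$ to the pieces of $\Gamma_\sigma$ lying inside the $U_{j,\sigma}$'s, estimate each piece using the approximation property already established in Theorem \ref{da9000} (in its arithmetic incarnation (\ref{da0703})), and show the complementary part of $\Gamma_\sigma$ contributes a term negligible compared to $\|\xi_1\|^{-2}$. First I would split $\Gamma_\sigma = \bigl(\bigcup_{j\in I_{2,\sigma}}\Gamma_{\sigma,j}\bigr) \cup \Gamma_\sigma'$, where $\Gamma_{\sigma,j} = \Gamma_\sigma\cap U_{j,\sigma}$ and $\Gamma_\sigma' = \Gamma_\sigma\smallsetminus\bigcup_j U_{j,\sigma}$. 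By condition (\trm{2}), equation (\ref{da0620}), on $\Gamma_\sigma'$ we have $|\xi_1/\eta^{n_1}| = 1 \geq a_1\|\xi_1\|$, which contradicts $\|\xi_1\| > 8/a_1$ from condition (\trm{3}); hence $\Gamma_\sigma'$ is empty, so $\Gamma_\sigma = \bigsqcup_{j\in I_{2,\sigma}}\Gamma_{\sigma,j}$ and the integral is a finite sum of local contributions — this is the key simplification and it is precisely where the number-field hypotheses enter.

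Next, on each $\Gamma_{\sigma,j}$ I would use (\ref{da0900}), $G_2(x) = \varphi_2^*(\psi^*(G_x))(x)$, together with the relation $\psi^*(\xi_1/\eta^{n_1}) = \eta^{n_1}/\xi_1$ from (\ref{da0310}): on the circle $|\xi_1/\eta^{n_1}| = 1$ the automorphism $\psi$ acts as $w\mapsto \bar w$ on the coordinate, so pulling back turns $z$ into $\overline z$ (up to the normalization by $\|\xi_1\|$). Thus on $\Gamma_{\sigma,j}$, applying (\ref{da0703}) with the point $x$ ranging over $\Gamma_{\sigma,j}$ — note $|\xi_1/\eta^{n_1}(x)| = 1 < 2$ there, so the hypothesis of (\ref{da0703}) is met — gives $G_2 = \beta_{j,\sigma}\cdot\overline z + (\text{error})$, with the error bounded by $\bigl(a_5 m |z|^2 + n_1\rho_2^m|z|\bigr)\cdot|\omega/dz(x)|$ and $|z| = |\xi_1/\eta^{n_1}|/\|\xi_1\| = 1/\|\xi_1\|$ on $\Gamma_{\sigma,j}$. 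Therefore $dG_2 \approx \beta_{j,\sigma}\,d\overline z \approx \overline{\beta_{j,\sigma}^{-1}}\,\overline\omega$ along $\Gamma_{\sigma,j}$, and $\frac{1}{2\pi\sqrt{-1}}\int_{\Gamma_{\sigma,j}}\omega\cdot G_2$ is, by residue/Stokes bookkeeping around the circle of radius $1/\|\xi_1\|$ in the $z$-coordinate, equal to $\beta_{j,\sigma}^2/\|\xi_1\|^2$ plus an error of size $O\bigl((a_{10}m\|\xi_1\|^{-3} + \rho_2^m\|\xi_1\|^{-2})\|\omega\|^2\bigr)$; here I would absorb $|\beta_{j,\sigma}|^2 = |\omega/dz(x_{j,\sigma})|^2$ and $|\omega/dz(x)|$ on $\Gamma_{\sigma,j}$ into $\|\omega\|^2$ using the bound hypothesis $B|\omega/dz| > |u_j|,|e_j|,\dots$ and the comparison between the local $|\omega/dz|$ and the global canonical norm $\|\omega\|$, with $B$ (hence the implied constants) controlled by $n_1, X_{\sca{C}}$.

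Summing over the finitely many $j\in I_{2,\sigma}$ and collecting errors yields (\ref{da0701}). The main obstacle I anticipate is making the change of variable under $\psi^*$ fully rigorous along the full circle $\Gamma_{\sigma,j}$ rather than just near the single point $x_{j,\sigma}$: Theorem \ref{da9000} only gives the approximation $G_x(x') \approx (\omega'/dz)(x)\cdot z(x')$ in the small disc $|z(x')-z(x)| < 1/(3m)$, whereas the circle $|z| = 1/\|\xi_1\|$ has circumference of order $1/\|\xi_1\|$, so one needs $1/\|\xi_1\| \lesssim 1/m$, i.e. $\|\xi_1\| > a_{10}m$ — exactly the stated hypothesis — to cover the whole circle by overlapping such discs centered at points $x\in\Gamma_{\sigma,j}$ and patch the local estimates together. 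A secondary point requiring care is that the $\varphi_2$-pushforward (the $\mathrm{Trace}_{\varphi_2}$) introduces extra sheets of the cover, but condition (\trm{4}) / the assumption $C_1 = C_2 = 0$ ensures $D_1, D_3$ avoid $E_P$ and the relevant divisors, so the trace is holomorphic on the circle and the residue computation stays on the single sheet through $E_P$, exactly as in the introduction's derivation of (\ref{da0560}).
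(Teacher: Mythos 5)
Your proposal follows essentially the same route as the paper's own proof: localize the contour to the circles $\Gamma_{\sigma,j}$ inside the $U_{j,\sigma}$ (using (\ref{da0620}) and $\|\xi_1\|>8/a_1$), use $G_2=\psi^*(G_x)$ together with the approximation of Theorem \ref{da9000} and condition (iii) to replace $G_2$ by $\beta_{j,\sigma}\overline{z}$ with $|z|=1/\|\xi_1\|$ on the circle, compute $\frac{1}{2\pi\sqrt{-1}}\oint\beta_{j,\sigma}\,dz\cdot\beta_{j,\sigma}\overline{z}=\beta_{j,\sigma}^2/\|\xi_1\|^2$, and sum the error terms, with $\|\xi_1\|>a_{10}m$ entering exactly where you say (so that the reflected point stays within the $1/(3m)$-radius of validity of the local approximation). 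The only difference is presentational: you make explicit the step that the complement $\Gamma_\sigma'$ is empty, which the paper leaves implicit.
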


\begin{proof} Assume $\|\xi_1\|$ is sufficiently large.
For $j\in I_{2,\sigma}$,
let $U_{j,\sigma}$ be the open set
defined in property (\trm{2}) in the introduction. Let
$\Gamma_{\sigma,j}$ be the path on $U_{j,\sigma}$ determined by
$\left|\frac{\xi_1}{\eta^{n_1}}\right|=1$. Let $x$ be a point on
$\Gamma_{\sigma,j}$.
 Then there exists
$\theta\in [0,2\pi)$, such that
\begin{equation}\label{da0730}\frac{\xi_1}{\eta^{n_1}}(x)=
e^{i\theta}
\end{equation} And we have
\begin{equation}\label{da0760}\left|\tau(x)-\varrho_{i_j}\right|
<a_{12}\cdot \|\xi_1\|^{-1}\end{equation} where $a_{12}$ is a
positive number determined by $B,n_1,X_{\sca{C}}$.

Let \begin{equation}\label{da0770} G_x(s_{\omega},
\tau,z)=\sum_{l=0}^m\sum_{i\in I_1}b_{m,l}\cdot\tau^{-l}(x)\cdot
\frac{u_i}{\eta^{m_2}}(x)\cdot \frac{z\cdot\partial}{\partial
z}{\rm Trace}_{\varphi_2}\left( \tau^l\cdot
\frac{e_i}{\eta^{m_3}}\right)\end{equation} By (\ref{da0760}) and
{\bf Theorem~\ref{da9000}} and $\|\xi_1\|$ is large enough, we
have \begin{eqnarray}\lefteqn{
G_2(x)=\psi^*(G_x)(x)=}\nonumber\\
&&\psi^*\left({\frac{\omega}{dz}(x)\cdot z}
\right)(x)+O\left(\|\xi_1\|^{-2}\cdot\|\omega\|
\right)+O\left(\rho_2^m\cdot\left|\frac{\omega}{dz}(x)\right|\cdot
\|\xi_1\|^{-1}\right)\hspace{.3in}\label{da0720}\end{eqnarray}
where $\rho_2\in (0,\rho_1)$. Note
\begin{equation}\psi^*\left({\frac{\omega}{dz}(x)\cdot z}\right)(x)=
\frac{\frac{\omega}{dz}(x)\cdot z(x)^{-1}}{\|\xi_1\|^2}=
\frac{\beta_{j,\sigma}\cdot e^{-i\theta}}{\|\xi_1\|}\cdot
(1+O(\|\xi_1\|^{-1}))\label{da0740}\end{equation}

Therefore we have
\begin{eqnarray}\lefteqn{\frac{1}{2\pi\sqrt{-1}}\cdot
\int_{\Gamma_{\sigma,j}}
 \omega\cdot G_2=}\nonumber\\
&&\frac{1}{2\pi\sqrt{-1}}\cdot \int_{\Gamma_{\sigma,j}}
 \beta_{j,\sigma}\cdot dz\cdot \left(
 \frac{\beta_{j,\sigma}\cdot e^{-i\theta}}{\|\xi_1\|}+O(\|\xi_1\|^{-2}
\cdot\|\omega\|) +
O(\rho_2^m\cdot\left|\beta_{j,\sigma}\right|\cdot
\|\xi_1\|^{-1})\right)\nonumber\\
&=&\frac{1}{2\pi\sqrt{-1}}\cdot\int_0^{2\pi}
\beta_{j,\sigma}\cdot\frac{de^{i\theta}}{\|\xi_1\|} \cdot \left(
\frac{\beta_{j,\sigma}\cdot
e^{-i\theta}}{\|\xi_1\|}+O(\|\xi_1\|^{-2}\cdot\|\omega\|)+
O(\rho_2^m\cdot\left|\beta_{j,\sigma}\right|\cdot
\|\xi_1\|^{-1})\right)\nonumber\\
&=&\frac{\beta_{j,\sigma}^2}{\|\xi_1\|^2}+O\left(\|\xi_1\|^{-3}
\cdot\|\omega\|^2 \right)+O(\rho_2^m\cdot
\left|\beta_{j,\sigma}\right|^2 \cdot\|\xi_1\|^{-2})\label{da0750}
\end{eqnarray}
By (\ref{da0750}) and $\|\omega\|\geq O(|\beta_{j,\sigma}|)$, we
see our Lemma is true.
\end{proof}

\hspace{1in}

\begin{pot}{\ref{da0810}}
 By (\ref{da0812}), there exists $a_8>0$, such that
\begin{equation}\label{da0840}
\left|\sum_{j\in I_{2,\sigma}}\frac{\beta_{j,\sigma}^2}{
\|\xi_1\|^2}\right|>2\cdot \left(a_{10}m\cdot\|\xi_1\|^{-3}+
\rho_2^m\cdot\|\xi_1\|^{-2}\right)\cdot \|\omega\|^2\end{equation}
where $\|\xi_1\|>a_8$. By (\ref{da0840}) and {\bf Lemma
\ref{da0700}}, we have
\begin{equation}\label{da0850}
2\left|\sum_{j\in I_{2,\sigma}}\frac{\beta_{j,\sigma}^2}{
\|\xi_1\|^2}\right|>\left|\frac{1}{2\pi\sqrt{-1}}\cdot
\int_{\Gamma_{\sigma}} \omega\cdot
G_2\right|>\frac{1}{2}\left|\sum_{j\in
I_{2,\sigma}}\frac{\beta_{j,\sigma}^2}{
\|\xi_1\|^2}\right|\end{equation} Then by {\bf Lemma \ref{da0500}}
and the right hand side of (\ref{da0850}), we have
\begin{equation}\label{da0801}
\sum_{\sigma\in S}\log\left|\frac{1}{2\pi\sqrt{-1}}\cdot
\int_{\Gamma_{\sigma}} \omega\cdot G_2\right| \geq [F:\ca{Q}]\cdot
\log \left(\frac{m!\cdot m!}{(2m)!}\right)
\end{equation}

By (\ref{da0801}) and the left hand side of (\ref{da0850}), we
have
\begin{equation}\label{da0860}
\frac{2\sum_{\sigma\in S}\left|\sum_{i\in
I_{2,\sigma}}{\beta_{i,\sigma}^2}\right|}{
[F:\ca{Q}]\cdot\|\xi_1\|^2}
>\frac{m!\cdot m!}{(2m)!}\end{equation} Note
\begin{equation}\label{da0870}
\sum_{j=0}^{2m}\frac{(2m)!}{j!\cdot (2m-j)!}=2^{2m}\end{equation}
So we have \begin{equation}\label{da0880}\frac{(2m)!}{m!\cdot m!}
<{2^{2m}}
\end{equation}
Let $m$ be the minimal positive integer greater than $a_6$. Then
$m$ is  determined by $B,n_1,X_{\sca{C}}$. So by (\ref{da0860})
(\ref{da0880}), there exists $a_7,a_8$ determined by
$a_9,B,n_1,X_{\sca{C}}$ such that (\ref{da0815}) is true.
\end{pot}

\end{document}